\newcommand{\lra}{\longrightarrow}
\newcommand{\uu}{\mathscr{U}}
\newcommand\gpw{h_{\scriptscriptstyle \vep}}
\newcommand\hpw{h_{\scalebox{.4}{PW}}}
\newcommand{\RR}{\mathbb{R}}
\newcommand{\vep}{\varepsilon}
\newcommand*{\defeq}{\mathrel{\rlap{%
                     \raisebox{0.25ex}{$\m@th\cdot$}}%
                     \raisebox{-0.25ex}{$\m@th\cdot$}}%
                     =}
\newcommand*\owedge{\mathpalette\@owedge\relax}
\newcommand*\@owedge[1]{%
  \mathbin{%
    \ooalign{%
      $#1\m@th\bigcirc$\cr
      \hidewidth$#1\m@th\wedge$\hidewidth\cr
    }%
  }%
}
\newtheorem{thm}{Theorem}
\newtheorem{lemma}{Lemma}
\newtheorem{cor}{Corollary}
\newtheorem{defn}{Definition}
\newtheorem{prop}{Proposition}
\newtheorem*{definition-non}{Definition}
\newtheorem*{theorem-non}{Theorem}
\newtheorem*{proposition-non}{Proposition}
\newtheorem*{lemma-non}{Lemma}
\newtheorem*{corollary-non}{Corollary}
\newcommand{\beqa}{\begin{eqnarray}}
\newcommand{\beq}{\begin{equation}}
\newcommand{\eeqa}{\end{eqnarray}}
\newcommand{\eeq}{\end{equation}}
\newcommand\imp{\hspace{.2in}\Rightarrow\hspace{.2in}}
\newcommand\Aij{A^{\scalebox{0.6}{\emph{$\gamma$}}}_{ij}}
\newcommand\Aii{A^{\scalebox{0.6}{\emph{$\gamma$}}}_{ii}}
\newcommand\AZ{A_{\scalebox{0.5}{$Z$}}}
\newcommand\cd[2]{\nabla_{\!#1}{#2}}
\newcommand\cdh[2]{\nabla^{\scalebox{0.5}{\emph{h}}}_{\!#1}{#2}}
\newcommand\cdt[2]{\widetilde{\nabla}_{\!#1}{#2}}
\newcommand\hL{g_{\scalebox{0.4}{\emph{L}}}}
\newcommand\gL{g^{\scalebox{0.5}{$\gamma$}}_{\scalebox{0.4}{\emph{L}}}}
\newcommand\Ric{\text{Ric}_{\scalebox{0.6}{\emph{g}}}}
\newcommand\comma{\hspace{.2in},\hspace{.2in}}
\newcommand\commas{\hspace{.1in},\hspace{.1in}}
\newcommand\commass{\hspace{.05in},\hspace{.05in}}
\providecommand{\customgenericname}{}
\newcommand{\newcustomtheorem}[2]{%
  \newenvironment{#1}[1]
  {%
   \renewcommand\customgenericname{#2}%
   \renewcommand\theinnercustomgeneric{##1}%
   \innercustomgeneric
  }
  {\endinnercustomgeneric}
}
\begin{document}
\title[]{Geometry via Plane wave limits}
\author[]{Amir Babak Aazami}
\address{Clark University\hfill\break\indent
Worcester, MA 01610}
\email{aaazami@clarku.edu}

\maketitle
\begin{abstract}
Utilizing the covariant formulation of Penrose's plane wave limit by Blau et~al., we construct for any semi-Riemannian metric $g$ a family of ``plane wave limits."  These limits are taken along any geodesic of $g$, yield simpler metrics of Lorentzian signature, and are isometric invariants. We show that they generalize Penrose's limit to the semi-Riemannian regime and, in certain cases, encode $g$'s tensorial geometry and its geodesic deviation.  As an application of the latter, we partially extend a well known result by Hawking \& Penrose to the semi-Riemannian regime: On any semi-Riemannian manifold, if the Ricci curvature is nonnegative along any complete geodesic without conjugate points that is ``causally independent" (in a sense we make precise), then the curvature tensor along that geodesic must vanish in all normal directions. A Morse Index Theorem is also proved for such geodesics.
\end{abstract}

\section{Introduction}
In this article we extend and adapt R. Penrose's \emph{plane wave limit} \cite{penPW} as a purely geometric tool. Penrose's limit is a famous construction in gravitational physics, by which any Lorentzian manifold is shown to admit a so-called \emph{plane wave} spacetime as a limit, this limit being taken along any lightlike geodesic.  The plane waves of Penrose's limit model gravitational radiation and have a rich history.  Interestingly, their mathematical discovery, by H. Brinkmann \cite{brinkmann}, predated their discovery within physics; see, e.g., \cite{AMS}.  The literature on plane wave limits is, by now, quite large, and much of the current interest in them is via the AdS/CFT correspondence, as plane wave limits of $\text{AdS}_{m} \times \mathbb{S}^n$ have been found that provide examples of maximally supersymmetric, quantizable backgrounds in string theory; see \cite{blau}.  For more on their properties, consult \cite{blau04,blau11}.
\vskip 6pt
The crucial step for us in utilizing Penrose's limit geometrically is to work, not with Penrose's original (and beautiful) local construction, but rather with its (equally beautiful) ``covariant formulation" discovered by \cite{blau04}. The latter extends Penrose's limit beyond local coordinates and onto the entirety of the lightlike geodesic (in Section \ref{subsec:Pen1} and Theorem \ref{prop:Blau} we review both constructions).  Our first task, therefore, is to extend \cite{blau04}'s formulation to any geodesic $\gamma$\,---\,spacelike, timelike, or lightlike\,---\,on a semi-Riemannian manifold $(M,g)$ of any signature, thereby defining a ``semi-Riemannian plane wave limit," a limit that is still a Lorentzian plane wave spacetime. Geometrically, arguably the most interesting case is when $g$ is Riemannian, and we begin, in Section \ref{sec:RPWL}, with this case. After defining a Riemannian version of our plane wave limit (Definition \ref{def:PWL}), our first result (Theorem \ref{thm:main}) details how it encodes the geometry of $g$ within its own geometry:
\begin{theorem-non}
Let $(M,g)$ be a Riemannian manifold.
\begin{enumerate}[leftmargin=*]
\item[i.] $g$ is flat $\!\!\iff\!\!$ its plane wave limits are all flat,
\item[ii.] $g$ has constant sectional curvature $\!\!\iff\!\!$ its plane wave limits are all locally conformally flat,
\item[iii.] $g$ is Ricci-flat $\!\!\iff\!\!$ its plane wave limits are all Ricci-flat,
\item[iv.] \emph{$\Ric(\gamma',\gamma')$} is constant along each geodesic $\gamma \!\!\iff\!\!$ $g$'s plane wave limits all have parallel Ricci tensors,
\item[v.] \emph{$(\nabla_{\!\gamma'}\text{Rm}_{\scalebox{0.6}{$g$}})(\cdot,\gamma',\gamma',\cdot)$} vanishes along each geodesic $\gamma \!\!\iff\!\!$ $g$'s plane wave limits are all locally symmetric,
\item[vi.] $g$ has signed Ricci curvature $\!\!\iff\!\!$ its plane wave limits all have Ricci curvatures of the same sign,
\item[vii.] $g$ is geodesically complete $\!\!\iff\!\!$ its plane wave limits are all geodesically complete.
\end{enumerate}
\end{theorem-non}

Our second result is more difficult: We show (Theorem \ref{prop:Blau}) that our Riemannian plane wave limit above can still be realized as an instance of Penrose's original (locally defined) plane wave limit:

\begin{theorem-non}
The plane wave limit of the Riemannian manifold $(M,g)$ along the unit-speed geodesic $\gamma(t)$ is the Penrose plane wave limit of the Lorentzian manifold $(I \times M,-d\tau^2+g)$ along the lightlike geodesic $\tilde{\gamma}(t) \!\defeq\! (t,\gamma(t))$. 
\end{theorem-non}

In Section \ref{sec:hereditary} we then move to the fully semi-Riemannian regime. Given that our geodesics may now be spacelike, timelike, or lightlike\,---\,and that we would like to take the limit along \emph{any} of them\,---\,there is a question of how best to define the plane wave limit in this fully general setting. We resolve this question by taking guidance from \cite{blau04}'s crucial observation  that what Penrose's original limit is actually encoding is \emph{geodesic deviation}. Viewed in this light, we present (Definition \ref{def:PWL2}) a semi-Riemannian plane wave limit\,---\,valid for all signatures and geodesics\,---\,and show (Proposition \ref{prop:conj}) that it continues to encode geodesic deviation\,---\,provided that the geodesics $\gamma$ along which the limit is taken are ``\emph{causally independent}": There exists along $\gamma$ a maximal parallel orthonormal frame $\{E_1,\dots,E_{r}\}\subseteq \gamma'^{\perp}$ such that $\text{Rm}_{\scalebox{0.6}{$g$}}(E_i,\gamma',\gamma',E_j) = 0$ whenever $E_i,E_j$ have different causal characters. (Riemannian geodesics, as well as timelike or lightlike Lorentzian geodesics, are causally independent by default.) That Definition \ref{def:PWL2} encodes the geodesic deviation of precisely such geodesics is its primary virtue.
\vskip 6pt
Indeed, for our third and final result, we use our plane wave limit to extend to the semi-Riemannian regime the focusing theorem of S.~Hawking \& Penrose \cite{HP1970} (see also \cite[Prop.~12.10,12.17]{beem}). Namely, on a Lorentzian manifold $(M,\hL)$, if $\gamma$ is a complete timelike or lightlike geodesic without conjugate points such that $\text{Ric}_{\scalebox{0.6}{$\hL$}}(\gamma',\gamma') \geq 0$, then $\text{Rm}_{\scalebox{0.6}{$\hL$}}(\cdot,\gamma',\gamma',\cdot)\big|_{\gamma'^{\perp}} = 0$. Using Jacobi tensors, this was subsequently extended to Riemannian manifolds in \cite[Theorem~2]{Eschenburg}. Crucial to these results is that the metric be positive-definite on $\gamma'^{\perp} \subseteq TM$ (or on $\gamma'^{\perp}/\gamma'$ when $\gamma$ is lightlike), for otherwise $\gamma$ may not extremize the length functional (e.g., when $\gamma$ is spacelike).  Thus no such result is known on semi-Riemannian manifolds in general, although a Morse Index Theorem (using the Maslov Index) has been established, in \cite{helfer,piccione}. Using our limit, we prove (Theorem \ref{thm:conj}) the following:
\begin{theorem-non}
Let $(M,g)$ be a semi-Riemannian $n$-manifold \emph{(}$n\geq 3$\emph{)} and $\gamma$ a complete causally independent geodesic. If $\gamma$ has no conjugate points and \emph{$\text{Ric}_{\scalebox{0.6}{$g$}}(\gamma',\gamma') \geq 0$}, then \emph{$\text{Rm}_{\scalebox{0.6}{$g$}}(\cdot,\gamma',\gamma',\cdot)\big|_{\gamma'^{\perp}} = 0$}. Furthermore, causally independent geodesic segments always have finitely many conjugate points.
\end{theorem-non}
Causal independence is crucial even in the last statement, as conjugate points on an arbitrary spacelike geodesic may accumulate; see \cite{helfer,piccione_conj}.

\section{A Riemannian plane wave limit}
\label{sec:RPWL}
Our goal in this section is fourfold:
\begin{enumerate}[leftmargin=*]
\item[1.] To introduce our extension of Penrose's plane wave limit in the special first case when $(M,g)$ is a \emph{Riemannian} manifold (Definition \ref{def:PWL}).
\item[2.] To collect together the curvature properties of Lorentzian \emph{pp-wave} metrics (Proposition \ref{prop:1}), of which plane waves are a special case. Although pp-waves are well known (see, e.g., \cite{beem,flores,blau11,globke}), we rederive their properties here for the benefit of those Riemannian geometers who may be unfamiliar with them, and because they will play a crucial role in the proofs of Theorems \ref{thm:main} and \ref{thm:conj}.
\item[3.] To show how the plane wave limit encodes much of the geometry of $(M,g)$ into its own geometry (Theorem \ref{thm:main}); e.g., we will show that $(M,g)$  has constant sectional curvature if and only if all of its plane wave limits are locally conformally flat, etc.
\item[4.] Finally, to show how the Riemannian wave plane limit can be realized as an instance of Penrose's original limit (Theorem \ref{prop:Blau}), and that, complementing \cite{Blaue_Fermi}, it can also be realized from Fermi coordinates.
\end{enumerate}

\subsection{Riemannian plane wave limit.} Let $\gamma(t)$ be a maximal unit-speed geodesic in a Riemannian $n$-manifold $(M,g)$.  At $T_{\gamma(0)}M$, choose an orthonormal frame $\{E_1,\dots,E_{n-1}\}$ orthogonal to $\gamma'(0)$ and parallel transport it along $\gamma(t)$; this choice is unique up to the action of the orthogonal group $O(n-1)$ on the subspace $\gamma'(0)^{\perp} \subseteq T_{\gamma(0)}M$.  Following \cite{blau04}, define along $\gamma(t)$ the ``wave profile" functions
\beqa
\label{eqn:geod}
\Aij(t) \defeq -\text{Rm}_{\scalebox{0.6}{\emph{g}}}(E_i,\gamma',\gamma',E_j)\Big|_{\gamma(t)} \comma i,j=1,\dots,n-1,
\eeqa
where $\text{Rm}_{\scalebox{0.6}{\emph{g}}}(a,b,c,d) \defeq g(\nabla_{\!a}\nabla_{\!b}\hspace{.01in}c - \nabla_{\!b}\nabla_{\!a}c - \nabla_{\![a,b]}c,d)$ is the Riemann curvature 4-tensor of $g$; note that $\Aij(t)=A^{\scalebox{0.6}{\emph{$\gamma$}}}_{ji}(t)$.  Thanks to \cite{blau04}, all the information needed to define $g$'s plane wave limit is contained in these $\Aij$'s:

\begin{defn}[Riemannian plane wave limit]
\label{def:PWL}
Let $(M,g)$ be a Riemannian $n$-manifold and $\gamma(t)$ a maximal geodesic with domain $I \subseteq \RR$.  The Lorentzian metric \emph{$\gL$} defined on $\RR \times I \times \RR^{n-1} \subseteq \RR^{n+1} = \{(v,t,x^1,\dots,x^{n-1})\}$ by
\beqa
\label{eqn:PWL}
\text{\emph{$\gL$}} \defeq 2dvdt + \Big(\!\sum_{i,j=1}^{n-1}\!\Aij(t)x^ix^j\Big) (dt)^2 + \sum_{i=1}^{n-1} (dx^i)^2.
\eeqa
with the $\Aij(t)$'s defined via any $g$-orthonormal frame $\{E_1,\dots,E_{n-1}\} $ parallel along $\gamma(t)$ as in \eqref{eqn:geod}, is the \emph{plane wave limit of $(M,g)$ along $\gamma$.}
\end{defn}

Although we could have defined $\gL$ more generally by $\sum_{i,j=1}^{n-1}\Aij(t)f(x^i,x^j)$ with $f\colon\RR^2\lra \RR$ a smooth function, in Theorem \ref{prop:Blau} we will make clear the virtues of our particular choice of $f$ in \eqref{eqn:PWL}.   As a first preliminary observation, let us note that the definite article ``\emph{the} plane wave limit" is justified:

\begin{lemma}
\label{lemma:same}
In Definition \ref{def:PWL}, for any other choice of orthonormal frame $\{\bar{E}_1,\dots,\bar{E}_{n-1}\}$ parallel along $\gamma(t)$, the corresponding limit metric \emph{$\bar{g}^{\scalebox{0.6}{$\gamma$}}_{\scalebox{0.4}{\emph{L}}}$} will be isometric to \emph{$\gL$}.  Also, if $\gamma(s)$ is a geodesic reparametrization of $\gamma(t)$, then $\Aij(s) = \Aij(at+b)$ for some $a,b \in \RR$.
\end{lemma}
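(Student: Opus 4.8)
The plan is to track how the symmetric matrix of wave profiles $A^{\gamma}(t)\defeq(\Aij(t))$ transforms under (i)~a change of the parallel frame and (ii)~a reparametrization of $\gamma$, and then in each case to exhibit the diffeomorphism (resp.\ affine change of parameter) that this transformation forces.

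For the frame independence, the structural fact I would isolate first is that the orthogonal complement $\gamma'^{\perp}\subseteq TM$ is a parallel sub-bundle along $\gamma$: since $\nabla_{\!\gamma'}\gamma'=0$ and parallel transport along $\gamma$ is a linear isometry, any vector field along $\gamma$ that is parallel and orthogonal to $\gamma'$ at one point stays orthogonal to $\gamma'$ at every point. Hence, given a second $g$-orthonormal parallel frame $\{\bar{E}_1,\dots,\bar{E}_{n-1}\}$ of $\gamma'^{\perp}$, I may write $\bar{E}_i=\sum_k c_{ik}(t)\,E_k$; differentiating along $\gamma$ and using that both frames are parallel gives $c_{ik}'(t)\equiv 0$, so $c=(c_{ik})$ is a \emph{constant} matrix, and evaluating the orthonormality relations at $\gamma(0)$ shows $c\in O(n-1)$. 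By multilinearity of $\text{Rm}_{\scalebox{0.6}{\emph{g}}}$ in \eqref{eqn:geod} this yields $\bar{A}^{\gamma}(t)=c\,A^{\gamma}(t)\,c^{\top}$. I would then finish by checking directly that the linear diffeomorphism $\Phi(v,t,x^1,\dots,x^{n-1})\defeq(v,\,t,\,cx)$, where $(cx)^i\defeq\sum_k c_{ik}x^k$, satisfies $\Phi^{*}\bar{g}^{\gamma}_{L}=\gL$: it fixes the $2\,dv\,dt$ term, preserves $\sum_i (dx^i)^2$ because $c$ is orthogonal, and — using $\bar{A}^{\gamma}=cA^{\gamma}c^{\top}$ — converts the quadratic form $\sum_{ij}\bar{A}_{ij}(t)\,\bar{x}^i\bar{x}^j$ back into $\sum_{ij}\Aij(t)\,x^ix^j$.

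For the reparametrization statement, I would use that a reparametrization $\gamma(s)=\gamma(\phi(s))$ of an affinely parametrized geodesic is again an affinely parametrized geodesic exactly when $\phi$ is affine, $\phi(s)=as+b$ with $a\neq 0$; so it suffices to treat $t=as+b$. The same parallel orthonormal frame remains parallel along the reparametrized curve (the induced covariant derivative along the curve is merely rescaled by $a$) and remains orthogonal to the new velocity, which is $a\,\gamma'$; using it in \eqref{eqn:geod} and pulling the two factors of $a$ out of $\text{Rm}_{\scalebox{0.6}{\emph{g}}}$ gives $\Aij(s)=a^{2}\,\Aij(as+b)$. When the reparametrization is also unit-speed one has $a=\pm 1$, recovering the stated form; in general the constant $a^{2}$ only rescales the null coordinate $v$ and so, together with the affine change $t=as+b$, it is absorbed by an isometry, leaving the limit metric unchanged up to isometry.

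I expect no serious obstacle: the lemma is essentially bookkeeping. The one genuinely load-bearing observation — the one I would state most carefully — is that two parallel orthonormal frames of $\gamma'^{\perp}$ differ by a \emph{constant} orthogonal matrix, which relies on $\gamma'^{\perp}$ itself (not merely $TM$) being preserved by parallel transport along $\gamma$; the remaining steps are substitution checks.
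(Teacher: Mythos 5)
Your proposal is correct and follows essentially the same route as the paper: the paper's proof likewise observes that two parallel orthonormal frames differ by a constant orthogonal matrix $K$, takes $(v,t,x^i)\mapsto(v,t,\sum_j K_{ji}x^j)$ as the isometry, and notes that geodesic reparametrizations are affine. You simply supply the details the paper leaves implicit (constancy of the transition matrix via differentiation, the verification $\Phi^*\bar{g}^{\scalebox{0.6}{$\gamma$}}_{\scalebox{0.4}{L}}=\gL$, and the $a^2$ factor, which is $1$ for unit-speed reparametrizations as in Definition \ref{def:PWL}).
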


\begin{proof}
 If each $\bar{E}_i = \sum_{j=1}^{n-1}K_{ij}E_j$, then $(K_{ij})$ is an orthogonal matrix and $(v,t,x^i) \mapsto (v,t,\sum_{j=1}^{n-1}K_{ji}x^j)$ will provide the isometry between $\gL$ and $\bar{g}^{\scalebox{0.6}{$\gamma$}}_{\scalebox{0.4}{\emph{L}}}$.  If $\gamma(s)$ is any geodesic reparametrization of $\gamma(t)$, then it must be linear.
\end{proof} 

More than that, $\gL$ is an isometric invariant of $(M,g)$, in the following sense:

\begin{lemma}\label{lemma:iso}
If $(M,g)$, $(\widetilde{M},\tilde{g})$ are Riemannian manifolds and $\varphi\colon M \lra \widetilde{M}$ is an isometry, then the plane wave limit of $(M,g)$ along the geodesic $\gamma$ is isometric to the plane wave limit of $(\widetilde{M},\tilde{g})$ along the geodesic $\tilde{\gamma} \defeq \varphi \circ \gamma$.
\end{lemma}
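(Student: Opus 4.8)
The plan is to track how the wave-profile functions $\Aij(t)$ transform under the isometry $\varphi$, and then invoke Lemma \ref{lemma:same} to conclude. First I would fix a maximal unit-speed geodesic $\gamma(t)$ in $(M,g)$ with domain $I$, together with a $g$-orthonormal frame $\{E_1,\dots,E_{n-1}\}$ parallel along $\gamma$ and orthogonal to $\gamma'$. Since $\varphi$ is an isometry, $\tilde\gamma \defeq \varphi \circ \gamma$ is again a maximal unit-speed geodesic in $(\widetilde M,\tilde g)$, with the same domain $I$, and the pushed-forward frame $\widetilde E_i \defeq d\varphi(E_i)$ is a $\tilde g$-orthonormal frame parallel along $\tilde\gamma$ and orthogonal to $\tilde\gamma'$ (here I use that isometries preserve the Levi-Civita connection, hence parallel transport, and map geodesics to geodesics).

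Next I would use that isometries preserve the Riemann $4$-tensor: for all vector fields, $\mathrm{Rm}_{\tilde g}(d\varphi(a),d\varphi(b),d\varphi(c),d\varphi(d)) = \mathrm{Rm}_{g}(a,b,c,d)$. Evaluating this identity along $\gamma$ with $a=E_i$, $b=c=\gamma'$, $d=E_j$, and recalling $d\varphi(\gamma'(t)) = \tilde\gamma'(t)$, gives
\beqa
\widetilde A^{\scalebox{0.6}{\emph{$\tilde\gamma$}}}_{ij}(t)
\;=\; -\mathrm{Rm}_{\scalebox{0.6}{\emph{$\tilde g$}}}(\widetilde E_i,\tilde\gamma',\tilde\gamma',\widetilde E_j)\big|_{\tilde\gamma(t)}
\;=\; -\mathrm{Rm}_{\scalebox{0.6}{\emph{g}}}(E_i,\gamma',\gamma',E_j)\big|_{\gamma(t)}
\;=\; \Aij(t)
\eeqa
for every $t \in I$ and all $i,j$. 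Hence the wave profiles computed on the $(\widetilde M,\tilde g)$ side, using the distinguished frame $\{\widetilde E_i\}$, coincide identically with those on the $(M,g)$ side.

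Consequently, by Definition \ref{def:PWL}, the plane wave limit $\tilde g^{\scalebox{0.6}{$\tilde\gamma$}}_{\scalebox{0.4}{\emph{L}}}$ of $(\widetilde M,\tilde g)$ along $\tilde\gamma$, when built from the frame $\{\widetilde E_i\}$, is literally the \emph{same} metric \eqref{eqn:PWL} on $\RR \times I \times \RR^{n-1}$ as the plane wave limit $\gL$ of $(M,g)$ along $\gamma$ built from $\{E_i\}$. Finally, by Lemma \ref{lemma:same}, the plane wave limit is independent, up to isometry, of the choice of parallel orthonormal frame on either side, so the two plane wave limits are isometric regardless of which admissible frames were used to define them. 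This completes the argument. I do not expect any genuine obstacle here; the only point requiring a little care is the bookkeeping that $\varphi$ simultaneously intertwines geodesics, parallel frames, and the curvature tensor, so that the profiles match on the nose rather than merely up to an orthogonal change of frame — though even the latter would suffice by Lemma \ref{lemma:same}.
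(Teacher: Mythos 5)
Your argument is correct and is essentially the paper's own proof: the paper likewise observes that parallel orthonormal frames along $\tilde\gamma$ are exactly the $d\varphi$-images of those along $\gamma$ and that $\varphi^*\text{Rm}_{\scalebox{0.6}{$\tilde g$}}=\text{Rm}_{\scalebox{0.6}{$g$}}$, so the wave profiles agree and the limits coincide. Your explicit appeal to Lemma \ref{lemma:same} to dispose of the frame choice is the same frame-independence the paper relies on implicitly; there is nothing to add.
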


\begin{proof}
Each orthonormal frame $\{\widetilde{E}_1,\dots,\widetilde{E}_{n-1}\} \subseteq \tilde{\gamma}'(0)^{\perp} \subseteq T_{\tilde{\gamma}(0)}\widetilde{M}$ parallel transported along $\tilde{\gamma}(t)$ is of the form $\{d\varphi(E_1),\dots,d\varphi(E_{n-1})\}$ for some orthonormal frame $\{E_1,\dots,E_{n-1}\} \subseteq \gamma'(0)^{\perp} \subseteq T_{\gamma(0)}M$ parallel transported along $\gamma(t)$.  As $\varphi^*\text{Rm}_{\scalebox{0.6}{$\tilde{g}$}} = \text{Rm}_{\scalebox{0.6}{$g$}}$, the result now follows.
\end{proof}
 
(Penrose's original limit satisfies the same property; see, e.g., \cite{philip}.)

\subsection{Review of pp-waves.} Metrics of the form \eqref{eqn:PWL}\,---\,examples of what are called \emph{pp-waves}\,---\,can in fact be defined in a coordinate-independent manner (see \cite{globke}), namely, as Lorentzian manifolds $(M,\hL)$ admitting a parallel lightlike vector field $N$ ($N = \partial_v$ in \eqref{eqn:PWL}) and whose Riemann curvature endomorphism $R_{\scalebox{0.6}{$\hL$}}$ satisfies 
\beqa
\label{eqn:ppwave}
\text{$R_{\scalebox{0.6}{$\hL$}}(X,Y)\,\cdot = 0$ for all $X, Y \in \Gamma(N^{\perp})$.}
\eeqa
As we'll see in Proposition \ref{prop:1} below, the ``plane wave" metric \eqref{eqn:PWL} also satisfies
\beqa
\label{eqn:planewave}
\text{$\nabla^{\scalebox{0.6}{$\hL$}}_{\!X}R_{\scalebox{0.6}{$\hL$}} = 0$ for all $X \in \Gamma(N^{\perp})$}.
\eeqa
Thus the plane waves of Penrose's limit are special cases of pp-waves.  For our purposes, however, it is best to work strictly in the so called \emph{Brinkmann coordinates} of \eqref{eqn:PWL}, which all pp-waves possess at least locally.  With that said, let us now provide an example of $\gL$.  Thus, consider any Riemannian manifold $(M,g)$ of constant sectional curvature $\lambda$.  Since $\text{Rm}_{\scalebox{0.6}{$g$}} = \frac{\lambda}{2}g\,{\tiny \owedge}\,g$, it follows that $\Aij(t) = -\lambda\delta_{ij}$ along any unit-speed geodesic $\gamma(t)$, and thus its plane wave limit $\gL$ has $\sum_{i,j=1}^{n-1}\Aij(t)x^ix^j = -\lambda\sum_{i=1}^{n-1}(x^i)^2.$
As we now show, a great deal of $g$'s geometry is actually being encoded, through this polynomial, in that of $\gL$.  In what follows, $H_{ij} \defeq \frac{\partial^2 H}{\partial x^ix^j}$, etc.:

\begin{prop}
\label{prop:1}
Set $\RR^{n+1} = \{(v,t,x^1,\dots,x^{n-1})\}$ and let $H(t,x^1,\dots,x^{n-1})$ be a smooth function defined on an open subset $\mathcal{U} \subseteq \RR^n$.  The Lorentzian metric $h$ defined on $\RR \times \mathcal{U}\subseteq \RR^{n+1}$ by
\beqa
\label{eqn:pp}
h \defeq 2dvdt + H(dt)^2+\sum_{i=1}^{n-1}(dx^i)^2
\eeqa
has the following curvature properties:
\begin{enumerate}[leftmargin=*]
\item[i.] $h$ is flat $\!\!\iff\!\! H$ is linear in $x^1,\dots,x^{n-1}$,
\item[ii.] $h$ is locally conformally flat $\!\!\iff\!\! H_{ii} = H_{jj}$ and $H_{ij} = 0$ for all $i\neq j$,
\item[iii.] $h$ is Ricci-flat $\!\!\iff\!\!\Delta H = 0$, where $\Delta$ is the Euclidean Laplacian of $H$ with respect to $x^1,\dots,x^{n-1}$,
\item[iv.] $h$ is scalar-flat,
\item[v.] $h$ is locally symmetric $\!\!\iff\!\!H_{ijk} = H_{ijt} = 0$ for all $i,j,k$,
\item[vi.] $h$ has harmonic curvature tensor $\!\!\iff\!\!\partial_i(\Delta H) = 0$ for all $i$,
\item[vii.] $h$ has parallel Ricci and Schouten tensors $\!\!\iff\!\!\Delta H$ is constant.
\end{enumerate}
Finally, $h$ is geodesically complete $\!\!\iff\!\!$ the Hamiltonian system
\beqa
\label{eqn:Ham}
\ddot{x}^i = \frac{1}{2}H_i(t,x^1(t),\dots,x^{n-1}(t))\comma i=1,\dots,n-1,
\eeqa
is complete for all initial data.
\end{prop}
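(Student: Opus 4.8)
The plan is to reduce the whole proposition to a single curvature computation in the Brinkmann chart and then read off each item. First I would compute the Christoffel symbols of $h$ in the coordinates $(v,t,x^1,\dots,x^{n-1})$. Since $\partial_v h_{ab}=0$ and the only non-constant coefficient is $h_{tt}=H$, I expect the only nonzero symbols to be $\Gamma^v_{tt}=\tfrac12 H_t$, $\Gamma^{x^i}_{tt}=-\tfrac12 H_i$, and $\Gamma^v_{tx^i}=\Gamma^v_{x^it}=\tfrac12 H_i$; in particular $\partial_v$ is a parallel null field. Feeding these into the curvature, I expect the only components of the $(0,4)$ tensor that survive (up to the usual symmetries) to be
\[
R(\partial_{x^i},\partial_t,\partial_{x^j},\partial_t)=-\tfrac12 H_{ij},
\]
the pp-wave normal form; in the plane-wave case $H=\sum\Aij x^ix^j$ this reproduces the profiles via $H_{ij}=2\Aij$, a useful consistency check. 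Everything below is extracted from this one block together with $h^{tt}=0$, $h^{vt}=1$, $h^{vv}=-H$, $h^{x^ix^j}=\delta_{ij}$.

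From the block, item~(i) is immediate: $R\equiv 0$ iff all $H_{ij}=0$, i.e. iff $H$ is affine in the $x^i$. Contracting with the inverse metric, the only nonzero Ricci component is $\mathrm{Ric}(\partial_t,\partial_t)=-\tfrac12\Delta H$ (the $v$-contraction drops because $h^{tt}=0$ and every curvature component carrying a $v$-index vanishes), which gives~(iii); and the scalar curvature is $h^{tt}\,\mathrm{Ric}_{tt}=0$, giving~(iv). For the derivative statements I would compute $\nabla_e R_{abcd}$ and $\nabla_e \mathrm{Ric}_{bc}$, checking that every Christoffel correction either carries a $v$-index factor or contracts against a vanishing three-spatial-index curvature component, so that $\nabla_{x^k}R_{itjt}=-\tfrac12 H_{ijk}$ and $\nabla_t R_{itjt}=-\tfrac12 H_{ijt}$; this yields~(v). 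The same bookkeeping gives $\nabla_{x^i}\mathrm{Ric}_{tt}=-\tfrac12(\Delta H)_i$ and $\nabla_t \mathrm{Ric}_{tt}=-\tfrac12(\Delta H)_t$, so $\nabla\mathrm{Ric}=0$ iff $\Delta H$ is constant; since $R\equiv 0$ forces the Schouten tensor to be a constant multiple of $\mathrm{Ric}$, this is~(vii). For~(vi) I would invoke the contracted second Bianchi identity $\nabla^a R_{abcd}=\nabla_c\mathrm{Ric}_{bd}-\nabla_d\mathrm{Ric}_{bc}$, whose only nontrivial component involves $\nabla_{x^i}\mathrm{Ric}_{tt}$, so harmonic curvature is equivalent to $\partial_i(\Delta H)=0$.

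Item~(ii) is the main obstacle. Assuming the dimension is at least $4$ (so that conformal flatness is detected by the Weyl tensor), I would substitute the block and $R\equiv 0$ into the Weyl tensor and find $W(\partial_{x^i},\partial_t,\partial_{x^j},\partial_t)=-\tfrac12 H_{ij}+\tfrac{1}{2(n-1)}\delta_{ij}\Delta H$. The delicate point is not this component but verifying that no \emph{other} component of $W$ can be nonzero, using that the only nonzero pieces of $R_{abcd}$ and $\mathrm{Ric}_{bc}$ are the $R_{itjt}$ block and $\mathrm{Ric}_{tt}$ together with the special form of $h_{ab}$. Granting this, $W=0$ is equivalent to $H_{ij}=\tfrac{1}{n-1}\delta_{ij}\Delta H$, i.e. $H_{ij}=0$ for $i\neq j$ and $H_{11}=\dots=H_{n-1,n-1}$, which is exactly the stated condition.

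Finally, for the completeness statement I would write out the geodesic equations $\ddot x^a+\Gamma^a_{bc}\dot x^b\dot x^c=0$. Because every $\Gamma^t_{bc}=0$, the coordinate $t$ is affine in the geodesic parameter, so after an affine reparametrization it suffices to treat $\dot t\equiv 1$ (the degenerate case $\dot t\equiv 0$ gives $\ddot v=\ddot x^i=0$, hence straight lines, always complete). With $\dot t=1$ the $x^i$-equations read $\ddot x^i=\tfrac12 H_i(t,x(t))$, which is precisely the Hamiltonian system \eqref{eqn:Ham}, while the $v$-equation $\ddot v=-\tfrac12 H_t-\sum_i H_i\dot x^i$ has a right-hand side that is an explicit smooth function of $t$ once $x(t)$ is known. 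Hence $v$ is recovered by two integrations on exactly the interval on which $x(t)$ exists, so the $v$-coordinate never obstructs extendibility: a geodesic is inextendible precisely when its underlying solution of \eqref{eqn:Ham} is. This gives the claimed equivalence, and I expect the only remaining subtlety to be phrasing ``complete for all initial data'' so as to match maximal geodesics with maximal solutions of \eqref{eqn:Ham}.
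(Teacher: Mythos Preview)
Your proposal is correct and follows essentially the same route as the paper: compute the Christoffel symbols in Brinkmann coordinates, establish that the only nonvanishing curvature block is $\text{Rm}_h(\partial_i,\partial_t,\partial_t,\partial_j)=-\tfrac12 H_{ij}$, and then read off (i)--(vii) and the geodesic equations exactly as you outline. The only cosmetic difference is that for (vi) you invoke the contracted second Bianchi identity explicitly, whereas the paper appeals directly to the definition of harmonic curvature; the content is the same.
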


\begin{proof}
All of these properties are well known (see, e.g., \cite{beem,flores,blau11,globke}), though we will rederive them here for the benefit of those Riemannian geometers who may be unfamiliar with the properties of such spaces.  Let $\nabla^{\scalebox{0.5}{\emph{h}}}$ denote the Levi-Civita connection of $h$.  To begin with, the nonvanishing Christoffel symbols of $h$ are
\beqa
\label{eqn:Ch}
\cdh{\partial_{i}}{\partial_t}= \cdh{\partial_t}{\partial_{i}}= \frac{H_{i}}{2}\partial_v \comma \cdh{\partial_t}{\partial_t} = \frac{H_t}{2}\partial_v - \frac{1}{2}\sum_{i=1}^{n-1} H_i\partial_{i},
\eeqa
from which it follows that $R_{\scalebox{0.5}{\emph{h}}}(\partial_{i},\partial_{j})\partial_{k} = 0$ for all $i,j,k=x^1,\dots,x^{n-1}$, where $R_{\scalebox{0.5}{\emph{h}}}(a,b)c \defeq  \nabla^{\scalebox{0.5}{\emph{h}}}_{\!a}\nabla^{\scalebox{0.5}{\emph{h}}}_{\!b}c - \nabla^{\scalebox{0.5}{\emph{h}}}_{\!b}\nabla^{\scalebox{0.5}{\emph{h}}}_{\!a}c - \cdh{[a,b]}{c}$ is the curvature endomorphism of $h$ (cf.~\eqref{eqn:ppwave} above, with $N = \partial_v$ the parallel lightlike vector field).  Likewise,
$$
R_{\scalebox{0.5}{\emph{h}}}(\partial_{i},\partial_{j})\partial_t =  \nabla^{\scalebox{0.5}{\emph{h}}}_{\!\partial_{i}}\nabla^{\scalebox{0.5}{\emph{h}}}_{\!\partial_{j}}\partial_t - \nabla^{\scalebox{0.5}{\emph{h}}}_{\!\partial_{j}}\nabla^{\scalebox{0.5}{\emph{h}}}_{\!\partial_{i}}\partial_t = \frac{H_{ij}}{2}\partial_v - \frac{H_{ji}}{2}\partial_v = 0,
$$
so that in fact $R_{\scalebox{0.5}{\emph{h}}}(X,Y)V = 0$ for all $X,Y \in \Gamma(\partial_v^{\perp})$ and all $V \in \mathfrak{X}(M)$.  The only components remaining of the Riemann curvature 4-tensor $\text{Rm}_{\scalebox{0.5}{\emph{h}}}$ of $h$, and indeed the only generally nonvanishing ones in the coordinates \eqref{eqn:pp}, are
\beqa
\label{eqn:psec}
\text{Rm}_{\scalebox{0.5}{\emph{h}}}(\partial_i,\partial_t,\partial_t,\partial_j) = -\frac{H_{ij}}{2}\cdot
\eeqa
This, together with the fact that $h^{tt} = 0$, yields that the only nonvanishing components of the Ricci tensor $\text{Ric}_{\scalebox{0.5}{\emph{h}}}$ of $h$ and its covariant differential are
\beqa
\label{eqn:pRic}
\text{Ric}_{\scalebox{0.5}{\emph{h}}}(\partial_t,\partial_t) =  -\frac{1}{2}\Delta H  \commas (\nabla^{\scalebox{0.5}{\emph{h}}}_{\!\partial_{\alpha}}\text{Ric}_{\scalebox{0.5}{\emph{h}}})(\partial_t,\partial_t) = -\frac{1}{2}\partial_\alpha(\Delta H)\commas  \alpha = i,t,
\eeqa
where $\Delta H \defeq \sum_{i=1}^{n-1} H_{ii}$ is the Euclidean Laplacian of $H$ with  respect to $x^1,\dots,x^{n-1}$.  Because $h^{tt} = 0$, it follows at once that $h$ is scalar-flat.  Next, 
\beqa
(\nabla^{\scalebox{0.5}{\emph{h}}}_{\!\partial_k}\text{Rm}_{\scalebox{0.6}{\emph{h}}})(\partial_i,\partial_t,\partial_t,\partial_j) \!\!&=&\!\! \partial_k(\text{Rm}_{\scalebox{0.6}{\emph{h}}}(\partial_i,\partial_t,\partial_t,\partial_j)) - \text{Rm}_{\scalebox{0.6}{\emph{h}}}(\underbrace{\cdh{\partial_k}{\partial_i}}_{0},\partial_t,\partial_t,\partial_j)\nonumber\\
&&\hspace{-.05in} -\,\text{Rm}_{\scalebox{0.6}{\emph{h}}}(\partial_i,\!\underbrace{\cdh{\partial_k}{\partial_t}}_{\text{$\frac{H_k}{2}\partial_v$}},\partial_t,\partial_j) -\text{Rm}_{\scalebox{0.6}{\emph{h}}}(\partial_i,\partial_t,\!\underbrace{\cdh{\partial_k}{\partial_t}}_{\text{$\frac{H_k}{2}\partial_v$}},\partial_j)\nonumber\\
&&\hspace{1.6in}-\,\text{Rm}_{\scalebox{0.6}{\emph{h}}}(\partial_i,\partial_t,\partial_t,\!\underbrace{\cdh{\partial_k}{\partial_j}}_{0})\nonumber\\
&\overset{\eqref{eqn:psec}}{=}& -\frac{H_{kij}}{2}\cdot\label{eqn:new}
\eeqa
(In particular, $H$ will be quadratic in $x^1,\dots.x^{n-1}$ if and only if $\nabla^{\scalebox{0.5}{\emph{h}}}_{\!X}\text{Rm}_{\scalebox{0.5}{\emph{h}}} = 0$ for all $X \in \Gamma(\partial_v^{\perp})$; cf.~\eqref{eqn:planewave} above.) Replacing $\partial_k$ with $\partial_t$ yields $-\frac{H_{tij}}{2}$, with all other components of $\nabla^{\scalebox{0.5}{\emph{h}}}\text{Rm}_{\scalebox{0.5}{\emph{h}}}$ vanishing.  From the curvature properties derived thus far, it follows easily that the only nonvanishing components of $h$'s Weyl curvature tensor (assuming $n \geq 3$),
$$
W_{\scalebox{0.5}{\emph{h}}} \defeq \text{Rm}_{\scalebox{0.5}{\emph{h}}} - \frac{1}{n-1}\text{Ric}_{\scalebox{0.5}{\emph{h}}}\,{\tiny \owedge}\,h + \cancelto{0}{\frac{\text{scal}_{\scalebox{0.5}{\emph{h}}}}{2n(n-1)}h\,{\tiny \owedge}\,h},
$$
are
$$
W_{\scalebox{0.5}{\emph{h}}}(\partial_i,\partial_t,\partial_t,\partial_j) = -\frac{H_{ij}}{2} + \frac{\Delta H}{2(n-1)}\delta_{ij}.
$$
All of the properties i.-vii.~now follow easily (for vi., recall that $h$ has harmonic curvature tensor if $(\nabla^{\scalebox{0.6}{$h$}})^*\text{Rm}_{\scalebox{0.6}{$h$}} = 0$, where $(\nabla^{\scalebox{0.6}{$h$}})^*$ is the adjoint of $\nabla^{\scalebox{0.6}{$h$}}$; cf., e.g., \cite[p.~59]{petersen}).  Finally, if $\tilde{\gamma}(s) = (\tilde{\gamma}^v(s),\tilde{\gamma}^t(s),\tilde{\gamma}^1(s),\dots,\tilde{\gamma}^{n-1}(s))$ is any geodesic of $h$, then a straightforward computation yields
\beqa
\left.\begin{array}{lcl}
\ddot{\tilde{\gamma}}^v \!\!&=&\!\! -\frac{\dot{\tilde{\gamma}}^t}{2}\left(H_t\,\dot{\tilde{\gamma}}^t + \sum_{i=1}^{n-1}H_i\,\dot{\tilde{\gamma}}^i\right),\\
\ddot{\tilde{\gamma}}^t \!\!&=&\!\! 0,\label{eqn:t}\\
\ddot{\tilde{\gamma}}^i \!\!&=&\!\! \frac{(\dot{\tilde{\gamma}}^t)^2}{2}H_i\comma i=1,\dots,n-1.
\end{array}\right\}
\eeqa
As $\tilde{\gamma}^t(s)$ is linear, and as $\ddot{\tilde{\gamma}}^v$ is independent of $\tilde{\gamma}^v,\dot{\tilde{\gamma}}^v$, these equations will be determined once the ($\tilde{\gamma}^v$-independent) $\ddot{\tilde{\gamma}}^i$ equations are. Thus the completeness of the latter will determine that of the geodesic.
\end{proof}

\subsection{Geometric properties preserved by the plane wave limit.}\label{subsec:geom} Bringing together Definition \ref{def:PWL} and \eqref{eqn:psec}, let us emphasize the all-important property that the only nonvanishing components of $\text{Rm}_{\scalebox{0.6}{$\gL$}}$ are
\beqa
\label{eqn:crucial}
\text{Rm}_{\scalebox{0.6}{$\gL$}}(\partial_i,\partial_t,\partial_t,\partial_j)=-\Aij(t) = \text{Rm}_{\scalebox{0.6}{$g$}}(E_i,\gamma',\gamma',E_j).
\eeqa
Recall that constant-curvature manifolds have plane wave limits $\gL$ with $\sum_{i,j=1}^{n-1}\Aij(t)x^ix^j = -\lambda\sum_{i=1}^{n-1}(x^i)^2$.  Thanks to Proposition \ref{prop:1}, we now know that such $\gL$'s are locally conformally flat and locally symmetric\,---\,suggesting that $g$'s curvature has been encoded, via \eqref{eqn:crucial}, not just in the metric component $(\gL)_{tt}$, but in the actual \emph{tensorial geometry} of $\gL$. As our first application of the plane wave limit, we now show that this is no coincidence:

\begin{thm}
\label{thm:main}
Let $(M,g)$ be a Riemannian manifold.
\begin{enumerate}[leftmargin=*]
\item[i.] $g$ is flat $\!\!\iff\!\!$ its plane wave limits are all flat,
\item[ii.] $g$ has constant sectional curvature $\!\!\iff\!\!$ its plane wave limits are all locally conformally flat,
\item[iii.] $g$ is Ricci-flat $\!\!\iff\!\!$ its plane wave limits are all Ricci-flat,
\item[iv.] \emph{$\Ric(\gamma',\gamma')$} is constant along each geodesic $\gamma \!\!\iff\!\!$ $g$'s plane wave limits all have parallel Ricci tensors,
\item[v.] \emph{$(\nabla_{\!\gamma'}\text{Rm}_{\scalebox{0.6}{$g$}})(\cdot,\gamma',\gamma',\cdot)$} vanishes along each geodesic $\gamma \!\!\iff\!\!$ $g$'s plane wave limits are all locally symmetric,
\item[vi.] $g$ has signed Ricci curvature $\!\!\iff\!\!$ its plane wave limits all have Ricci curvatures of the same sign,
\item[vii.] $g$ is geodesically complete $\!\!\iff\!\!$ its plane wave limits are all geodesically complete.
\end{enumerate}
\end{thm}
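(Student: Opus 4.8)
The plan is to read off all seven equivalences from a single dictionary relating the two sides. On the limit side, \eqref{eqn:crucial} says that the \emph{entire} curvature tensor of a plane wave limit $\gL$ is carried by the functions $\Aij(t)=-\text{Rm}_{\scalebox{0.6}{$g$}}(E_i,\gamma',\gamma',E_j)$; on the original side, Proposition~\ref{prop:1} rewrites each listed property of the pp-wave $\gL$ as a property of its profile $H(t,x)=\sum_{i,j=1}^{n-1}\Aij(t)x^ix^j$. So for each item the route is the same: translate the statement about $\gL$ into one about $H$ by Proposition~\ref{prop:1}; translate that into one about the $\Aij(t)$ using the elementary identities $H_{ij}=2\Aij(t)$, $H_{ijk}=0$, $H_{ijt}=2\tfrac{d}{dt}\Aij(t)$, $\tfrac12 H_i=\sum_j\Aij(t)x^j$ and $\Delta H=2\sum_i\Aii(t)$; and finally translate that into one about $g$ along $\gamma$, using that $\{E_1,\dots,E_{n-1},\gamma'\}$ is parallel orthonormal, so that differentiating \eqref{eqn:geod} in this frame gives $\tfrac{d}{dt}\Aij(t)=-(\nabla_{\!\gamma'}\text{Rm}_{\scalebox{0.6}{$g$}})(E_i,\gamma',\gamma',E_j)$ while tracing over $i$ gives $\sum_i\Aii(t)=-\Ric(\gamma'(t),\gamma'(t))$, so that $\Delta H=-2\,\Ric(\gamma',\gamma')$. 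To turn ``for every geodesic $\gamma$'' into a pointwise statement I will use that every unit tangent vector is a geodesic velocity, that every orthonormal pair extends to a parallel frame as in Definition~\ref{def:PWL}, and polarization of quadratic forms.

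Carrying this out item by item: For (i), Proposition~\ref{prop:1}(i) makes ``$\gL$ flat'' $\iff$ ``$H$ linear in $x$'' $\iff$ $\Aij\equiv0$ $\iff$ $\text{Rm}_{\scalebox{0.6}{$g$}}(E_i,\gamma',\gamma',E_j)=0$ along $\gamma$, which over all $\gamma$ kills every sectional curvature of $g$, hence $\text{Rm}_{\scalebox{0.6}{$g$}}=0$. For (iii), Proposition~\ref{prop:1}(iii) (or \eqref{eqn:pRic}) makes ``$\gL$ Ricci-flat'' $\iff$ $\Delta H=0$ $\iff$ $\Ric(\gamma',\gamma')\equiv0$ along $\gamma$, so over all $\gamma$ and by polarization $\Ric=0$. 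For (iv), \eqref{eqn:pRic} makes ``$\gL$ has parallel Ricci'' $\iff$ $\partial_\alpha(\Delta H)=0$ for all $\alpha$; since here $\Delta H=-2\,\Ric(\gamma',\gamma')$ depends on $t$ only, this is exactly ``$\Ric(\gamma',\gamma')$ constant along $\gamma$''. For (v), Proposition~\ref{prop:1}(v) makes ``$\gL$ locally symmetric'' $\iff$ $H_{ijk}=H_{ijt}=0$; the first holds automatically and the second reads $(\nabla_{\!\gamma'}\text{Rm}_{\scalebox{0.6}{$g$}})(E_i,\gamma',\gamma',E_j)=0$, which — because $(\nabla_{\!\gamma'}\text{Rm}_{\scalebox{0.6}{$g$}})(X,\gamma',\gamma',Y)$ vanishes as soon as $X$ or $Y$ is proportional to $\gamma'$ — is precisely the stated vanishing. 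For (vi), \eqref{eqn:pRic} gives $\text{Ric}_{\scalebox{0.6}{$\gL$}}=\Ric(\gamma'(t),\gamma'(t))\,(dt)^2$, a scalar function times the positive-semidefinite form $(dt)^2$, so $\text{Ric}_{\scalebox{0.6}{$\gL$}}$ is semidefinite of a definite sign iff $t\mapsto\Ric(\gamma'(t),\gamma'(t))$ keeps that sign, and a single sign for all $\gamma$ amounts to ``$\Ric$ semidefinite of a fixed sign on $M$''. For (vii), the Hamiltonian system \eqref{eqn:Ham} becomes $\ddot{x}^i=\sum_j\Aij(t)x^j$, i.e.\ the Jacobi equation along $\gamma$ in the frame $\{E_i\}$; being linear with smooth coefficients on the maximal interval $I$ of $\gamma$ it has all solutions defined on $I$, while by \eqref{eqn:t} the $t$-component of any $\gL$-geodesic is affine, so a $\gL$-geodesic with nonzero $t$-velocity extends only while its $t$-component remains in $I$; hence $\gL$ is complete iff $I=\RR$, and quantifying over $\gamma$ yields (vii). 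The reverse implications in (i), (iii), (iv), (v) run the same dictionary backwards; the implication ``$g$ of constant curvature $\Rightarrow$ all $\gL$ locally conformally flat and locally symmetric'' was already recorded just before the theorem.

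The only item with genuine content is (ii). Proposition~\ref{prop:1}(ii) makes ``$\gL$ locally conformally flat'' $\iff$ ``$H_{ii}=H_{jj}$ for all $i,j$ and $H_{ij}=0$ for $i\neq j$'', i.e.\ (by frame-independence, Lemma~\ref{lemma:same}) the symmetric matrix $(\Aij(t))$ is a scalar multiple of the identity along $\gamma$ — equivalently, the sectional curvature $\gsec(\gamma'(t),X)$ is independent of the unit vector $X\in\gamma'(t)^{\perp}$ for every $t$. Quantified over all geodesics this says: at each point $p$ and for each unit $v\in T_pM$, $\gsec(v,X)$ is the same for all unit $X\perp v$. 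The remaining step — and the one I expect to be the only genuinely nontrivial obstacle — is to upgrade this to \emph{constant} sectional curvature. I would do it in two moves: first, for $n\geq3$ any two $2$-planes of $T_pM$ are linked by a short chain of $2$-planes in which consecutive members share a line (one step if the two planes already intersect; otherwise choose lines $\ell\subset\pi$, $\ell'\subset\pi'$ and interpose $\ell\oplus\ell'$), so the hypothesis forces $\gsec$ to be constant on the Grassmannian of $2$-planes at each point; second, Schur's lemma then promotes this pointwise-constant curvature to globally constant curvature (for $n\geq3$). For $n=2$ the locally-conformally-flat condition is vacuous, so (ii) is read with $n\geq3$, consistently with the Weyl computation in Proposition~\ref{prop:1}. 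Beyond this Schur-type rigidity argument, everything else is bookkeeping with \eqref{eqn:crucial}, the derivatives of the quadratic profile $H$, and polarization.
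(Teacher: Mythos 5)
Your proposal is correct and follows essentially the same route as the paper: translate each property of $\gL$ through Proposition~\ref{prop:1} into a statement about the quadratic profile $H$, hence about the $\Aij(t)$, hence about $g$ along $\gamma$, and then quantify over all geodesics via polarization; for item (ii) your chain-of-planes argument plus Schur's lemma is the same pointwise-isotropy-implies-constant-curvature step the paper uses (the paper chains through a vector $X$ orthogonal to both $V$ and $W$). Your added remarks — the explicit treatment of (vi) as $\text{Ric}_{\scalebox{0.6}{$\gL$}}=\Ric(\gamma',\gamma')\,(dt)^2$ and the $n\ge 3$ caveat for (ii) — are consistent with, and slightly more explicit than, the paper's text.
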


\begin{proof}
Let $\{E_1,\dots,E_{n-1}\}$ be a parallel frame along a geodesic $\gamma(t)$ of $(M,g)$ as in \eqref{eqn:geod}, with $\Aij(t) = -\text{Rm}_{\scalebox{0.6}{\emph{g}}}(E_i,\gamma',\gamma',E_j)\big|_{\gamma(t)}$ the corresponding functions along $\gamma(t)$.  As we will be repeatedly calling upon Proposition \ref{prop:1}, recall also that the plane wave limit $\gL$ has $H_{ij} = 2\Aij(t)$.  We now proceed case-by-case:
\begin{enumerate}[leftmargin=*]
\item[i.] $g$ is flat if and only if each
$$
\text{Rm}_{\scalebox{0.6}{\emph{g}}}(E_i,\gamma',\gamma',E_i)\Big|_{\gamma(t)} = 0 \comma i=1,\dots,n-1,
$$
because at each $t$ this is the sectional curvature of the 2-plane spanned by the orthonormal pair $\{\gamma'(t),E_i\big|_{\gamma(t)}\}$, and every 2-plane can be represented in this way.  That
$$
\text{Rm}_{\scalebox{0.6}{\emph{g}}}\Big(\frac{E_i+E_j}{\sqrt{2}},\gamma',\gamma',\frac{E_i+E_j}{\sqrt{2}}\Big)\Big|_{\gamma(t)} = 0
$$
then yields 
$$
\text{Rm}_{\scalebox{0.6}{\emph{g}}}(E_i,\gamma',\gamma',E_j)\Big|_{\gamma(t)} = 0 \comma i,j=1,\dots,n-1.
$$
Thus $g$ is flat if and only if each $\Aij(t) = 0$; by Proposition \ref{prop:1}, this is equivalent to $\gL$ being flat.

\item[ii.] As we saw in Section \ref{sec:RPWL} above, $g$ has constant sectional curvature $\lambda$ if and only if the plane wave limit $\gL$ along any unit-speed geodesic $\gamma(t)$ has $\sum_{i,j=1}^{n-1}\Aij(t)x^ix^j = -\lambda\sum_{i=1}^{n-1}(x^i)^2.$ By Proposition \ref{prop:1}, such a $\gL$ is locally conformally flat (in fact also locally symmetric).  For the converse, suppose that every $\gL$ is locally conformally flat.  Then at each $p \in M$, the following holds: For any geodesic $\gamma(t)$ starting at $p$ in the direction $\gamma'(0) \defeq V$, and for any orthonormal pair $E_i,E_j \in T_pM$ orthogonal to $V$, the conditions $A^{\scalebox{0.6}{\emph{$\gamma$}}}_{ii}(t) = A^{\scalebox{0.6}{\emph{$\gamma$}}}_{jj}(t)$ and $\Aij(t) = 0$ yield, at $t=0$,
$$
\hspace{.26in}\text{Rm}_{\scalebox{0.6}{\emph{g}}}(E_i,V,V,E_i) = \text{Rm}_{\scalebox{0.6}{\emph{g}}}(E_j,V,V,E_j) \defeq \lambda_{\scalebox{0.6}{\emph{V}}} \commas \text{Rm}_{\scalebox{0.6}{\emph{g}}}(E_i,V,V,E_j) = 0,
$$
where $A^{\scalebox{0.6}{\emph{$\gamma$}}}_{ii}(0) = A^{\scalebox{0.6}{\emph{$\gamma$}}}_{jj}(0) \defeq -\lambda_{\scalebox{0.6}{\emph{V}}}$.  From this it follows that all 2-planes containing $V$ have sectional curvatures $\lambda_{\scalebox{0.6}{\emph{V}}}$.  Now let $W \in T_pM$ be any other vector, and choose a unit vector $X$ orthogonal to $V$ and $W$.  Then
$$
\lambda_{\scalebox{0.6}{\emph{V}}} = \underbrace{\,\text{Rm}_{\scalebox{0.6}{\emph{g}}}(X,V,V,X)\,}_{\text{$\text{Rm}_{\scalebox{0.6}{\emph{g}}}(V,X,X,V)$}} = \lambda_{\scalebox{0.6}{\emph{X}}} = \underbrace{\,\text{Rm}_{\scalebox{0.6}{\emph{g}}}(W,X,X,W)\,}_{\text{$\text{Rm}_{\scalebox{0.6}{\emph{g}}}(X,W,W,X)$}} = \lambda_{\scalebox{0.6}{\emph{W}}}.
$$
Thus \emph{all} 2-planes at $T_pM$ have the same sectional curvatures.  But a Riemannian manifold $(M,g)$ with the property that its \emph{pointwise} sectional curvatures are all equal must, as is well known, have constant sectional curvature globally on $M$.

\item[iii.] Next, suppose that $g$ is Ricci-flat.  Then
\beqa
\label{eqn:Harris}
\hspace{.2in}\sum_{i=1}^{n-1}\Aii(t) = -\sum_{i=1}^{n-1}\text{Rm}_{\scalebox{0.6}{\emph{g}}}(E_i,\gamma',\gamma',E_i)\Big|_{\gamma(t)} = -\text{Ric}_{\scalebox{0.6}{\emph{g}}}(\gamma',\gamma')\Big|_{\gamma(t)} = 0.
\eeqa
But by \eqref{eqn:pRic}, the only nonvanishing component of $\gL$'s Ricci tensor is $\text{Ric}_{\scalebox{0.6}{$\gL$}}(\partial_t,\partial_t) = -\!\sum_{i=1}^{n-1}\Aii(t)$, hence $\gL$ is Ricci-flat.  As for the converse, if each $\gL$ is Ricci-flat, then each $\text{Ric}_{\scalebox{0.6}{\emph{g}}}(\gamma',\gamma')\big|_{\gamma(0)} = 0$ by \eqref{eqn:Harris}.  A polarization argument then yields $\text{Ric}_{\scalebox{0.6}{\emph{g}}}(v,w) = 0$ for all $v,w \in T_{\gamma(0)}M$.

\item[iv.] The case when each $\Ric(\gamma',\gamma')$ is constant along $\gamma(t)$ is similar to iii.; indeed, if $\gamma'(\Ric(\gamma',\gamma')) = 0$, then, similarly to \eqref{eqn:Harris},
$$
-\gamma'(\Ric(\gamma',\gamma')) = \frac{d}{dt}\Big(\sum_{i=1}^{n-1}\Aii(t)\Big) = \frac{1}{2}\frac{d(\Delta H)}{dt} = 0.
$$
By Proposition \ref{prop:1}, this is equivalent to $\gL$ having parallel Ricci tensor.

\item[v.] Finally, if the symmetric 2-tensor $(\nabla_{\!\gamma'}\text{Rm}_{\scalebox{0.6}{$g$}})(\cdot,\gamma',\gamma',\cdot)$ vanishes, then
$$
(\nabla_{\!\gamma'}{\text{Rm}_{\scalebox{0.6}{\emph{g}}}})(E_i,\gamma',\gamma',E_j)\Big|_{\gamma(t)} = \gamma'({\text{Rm}_{\scalebox{0.6}{\emph{g}}}}(E_i,\gamma',\gamma',E_j)) = 0,
$$
where we note that $\nabla_{\!\gamma'}\gamma' = \nabla_{\!\gamma'}E_i = 0$.  Thus each $\Aij(t)$ is constant.  By Proposition \ref{prop:1}, this is equivalent to $\gL$ being locally symmetric.

\item[vi.] The proof is similar to that of iii.~above.

\item[vii.] By Proposition \ref{prop:1}, $\gL$ will be geodesically complete if and only if its corresponding Hamiltonian system \eqref{eqn:Ham} is complete. As the $H$ of $\gL$ is quadratic in $x^1,\dots,x^{n-1}$, its Hamiltonian system is  linear in $x^1,\dots,x^{n-1}$.  Therefore, it will be complete if and only if each $\Aij(t)$ is defined for all $t \in \RR$, which will be the case if and only if the geodesic $\gamma(t)$ of $g$ is complete.
\end{enumerate}
This completes the proof.
\end{proof}

\subsection{Relation to Penrose's original plane wave limit.}\label{subsec:Pen1} Moving on, we now shed light on the origin of $\gL$ by proving that it is, in fact, a limit in Penrose's original sense. To do so, let us first briefly remark on the latter. On the face of it, Penrose's original construction would seem to have no analogue in Riemannian geometry.  That is because, although it is a ``blowing up" process of a Lorentzian metric $\hL$, this blowing up takes place not at a point, but rather along a \emph{lightlike} geodesic $\gamma$; i.e., one for which $\hL(\gamma',\gamma') = 0$ but $\gamma' \neq 0$ (see  \cite{andersonCG}).  Such geodesics have several non-Riemannian features, but the one in particular on which Penrose's construction depends is that lightlike geodesics can be used to construct local coordinates $(x^1,\dots,x^n)$ in which $\hL$ will take the form
\beqa
\label{eqn:stillL}
(\hL)_{ij} = \begin{pmatrix}
0 & 1 & 0 & \cdots & 0\\
1 & g_{22} & g_{23} & \cdots & g_{2n}\\
0 & g_{32} & \textcolor{red}{g_{33}} & \textcolor{red}{\cdots} & \textcolor{red}{g_{3n}}\\
\vdots & \vdots & \textcolor{red}{\vdots} & \textcolor{red}{\ddots} & \textcolor{red}{\vdots}\\
0 & g_{n2} & \textcolor{red}{g_{n3}} & \textcolor{red}{\cdots} & \textcolor{red}{g_{2n}}
\end{pmatrix},
\eeqa
where the red submatrix is positive definite (see, e.g., \cite[p.~60-1]{pen}).  The matrix \eqref{eqn:stillL} has the property that its signature is determined solely by the red submatrix, regardless of the components $g_{2j}$.  Penrose capitalized on this fact by ``zooming infinitesimally close" to the lightlike geodesic that gave rise to \eqref{eqn:stillL}, in such a way as to make each $g_{2j} \to 0$, leaving behind a simpler\,---\,yet still Lorentzian, and non-flat\,---\,metric in the limit. As shown in \cite{blau04}, this beautiful construction nevertheless masks some important geometry, namely, that Penrose's limit does not, in fact, rely on the coordinates \eqref{eqn:stillL}, and that what it truly encodes is the \emph{geodesic deviation} of $\gamma(t)$, an issue to which we shall return in Section \ref{subsec:proof}.  Guided by \cite{blau04}, we now show that if one lifts geodesics $\gamma(t)$ of a Riemannian metric $g$ to lightlike geodesics $(t,\gamma(t))$ of the Lorentzian metric $-d\tau^2+g$, and then takes the plane wave limit of the latter via \cite{blau04}, then this is equivalent to both our Definition \ref{def:PWL} and Penrose's original local construction.
\begin{thm}
\label{prop:Blau}
The plane wave limit of the Riemannian manifold $(M,g)$ along the unit-speed geodesic $\gamma(t)$ is the Penrose plane wave limit of the Lorentzian manifold $(I \times M,-d\tau^2+g)$ along the lightlike geodesic $\tilde{\gamma}(t) \!\defeq\! (t,\gamma(t))$. 
\end{thm}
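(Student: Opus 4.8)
The plan is to realize the lifted curve $\tilde\gamma$ as a lightlike geodesic, to transport the parallel frame defining $\gL$ in Definition \ref{def:PWL} up to a parallel frame along $\tilde\gamma$ adapted to $\tilde\gamma'$, and then to invoke the covariant formulation of Penrose's limit of \cite{blau04}, which expresses that limit directly as the plane–wave metric \eqref{eqn:PWL} with wave profile read off from the curvature of $-d\tau^2+g$ along such a frame; a short curvature computation will then identify this profile with the $\Aij$ of Definition \ref{def:PWL}. A final appeal to \cite{blau04} identifies the covariant formulation with Penrose's original local construction from coordinates of the form \eqref{eqn:stillL}, which is what the statement asserts. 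By Lemma \ref{lemma:same} the answer does not depend on the frame chosen.

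First I would record the elementary facts. Writing $\overline g \defeq -d\tau^2+g$ and $\partial_\tau$ for the coordinate field on $I$ (parallel, with $\overline g(\partial_\tau,\partial_\tau)=-1$), one has $\tilde\gamma' = \partial_\tau+\gamma'$, hence $\overline g(\tilde\gamma',\tilde\gamma') = -1 + g(\gamma',\gamma') = 0$; and since the Levi–Civita connection of a product metric is the product connection, $\nabla^{\overline g}_{\tilde\gamma'}\tilde\gamma' = (0,\nabla_{\gamma'}\gamma') = 0$. So $\tilde\gamma$ is an affinely parametrized lightlike geodesic, with affine parameter $t$ precisely because $\gamma$ is unit-speed, and with domain $I$, matching the domain of $\gL$. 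Next I would build the adapted parallel frame: starting from the $g$-parallel orthonormal frame $\{E_1,\dots,E_{n-1}\}\perp\gamma'$ of Definition \ref{def:PWL}, I would set along $\tilde\gamma$
\[
E_+ \defeq \tilde\gamma' = \partial_\tau+\gamma', \qquad E_- \defeq \tfrac12(\gamma'-\partial_\tau), \qquad \overline E_i \defeq (0,E_i),
\]
and check the routine relations $\overline g(E_\pm,E_\pm)=0$, $\overline g(E_+,E_-)=1$, $\overline g(E_\pm,\overline E_i)=0$, $\overline g(\overline E_i,\overline E_j)=\delta_{ij}$, together with $\nabla^{\overline g}$-parallelism of all of them. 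This exhibits $\{E_+,E_-,\overline E_1,\dots,\overline E_{n-1}\}$ as a parallel pseudo-orthonormal frame adapted to $\tilde\gamma$, with screen space $\operatorname{span}\{\overline E_i\}$ of the correct rank $(n+1)-2=n-1$.

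Then I would invoke the covariant formulation of \cite{blau04}: the Penrose plane wave limit of $(I\times M,\overline g)$ along $\tilde\gamma$ is the metric \eqref{eqn:PWL} on $\RR\times I\times\RR^{n-1}$ whose profile is $\overline A_{ij}(t)\defeq-\text{Rm}_{\overline g}(\overline E_i,E_+,E_+,\overline E_j)\big|_{\tilde\gamma(t)}$, i.e.\ \eqref{eqn:geod} applied in $(I\times M,\overline g)$ along $\tilde\gamma$. The only computation of substance is then $\overline A_{ij}=\Aij$: since $(I,-d\tau^2)$ is flat and $\partial_\tau$ is parallel, the curvature $4$-tensor of the product metric vanishes on any argument tangent to the $I$-factor, so expanding $E_+=\partial_\tau+\gamma'$ multilinearly kills every term containing a $\partial_\tau$, leaving $\overline A_{ij}(t) = -\text{Rm}_g(E_i,\gamma',\gamma',E_j)\big|_{\gamma(t)} = \Aij(t)$. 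Hence the Penrose limit coincides, term by term, with the $\gL$ of Definition \ref{def:PWL}, and by the equivalence (proved in \cite{blau04}) of the covariant and the original local formulations the theorem follows. I would also note, complementing \cite{Blaue_Fermi}, that this same $\gL$ arises from Fermi coordinates along $\gamma$, since the transverse Hessian of $g$ in those coordinates is $-\Aij$ along $\gamma$.

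The step I expect to be the main obstacle is the appeal to the covariant formulation: one must confirm that the lifted frame $\{E_+,E_-,\overline E_i\}$ genuinely satisfies all of its hypotheses (parallel, null-adapted, complete screen frame of the right rank, and well-definedness of the limit independently of this choice, cf.\ Lemma \ref{lemma:same}), and one must track the normalization $\overline g(E_+,E_-)=1$ carefully so that the construction produces exactly the $2\,dv\,dt$ coefficient of \eqref{eqn:PWL} with no spurious factor of $2$; by contrast the lightlike/geodesic verifications and the product-curvature identity are routine.
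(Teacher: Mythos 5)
Your proposal is correct in outline and its first half coincides with the paper's opening step: the paper likewise verifies that $\tilde{\gamma}(t)=(t,\gamma(t))$ is a lightlike geodesic of the product metric, lifts the parallel frame, invokes the covariant formulation of \cite{blau04} to write the limit as \eqref{eqn:L_Brink}, and computes $A^{\scalebox{0.6}{$\tilde{\gamma}$}}_{ij}=\Aij$ exactly as you do, using the vanishing of the product curvature on $\partial_\tau$ (this is \eqref{eqn:Blau_A}); your pseudo-orthonormal frame $\{E_+,E_-,\overline{E}_i\}$ and its normalizations check out. Where you diverge is the second half: you dispose of the identification with Penrose's \emph{original local} construction by citing \cite{blau04}'s equivalence of the covariant and local formulations, whereas the paper proves that equivalence from scratch in this setting. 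Concretely, the paper builds semigeodesic coordinates \eqref{eqn:sg0}, passes to the lightlike coordinates \eqref{eqn:lor11} of the form \eqref{eqn:stillL}, performs Penrose's anisotropic scaling \eqref{eqn:tilde}, exploits the homothety $\gpw=\tilde{g}_{\scriptscriptstyle\vep}/\vep^2$ to take the $\vep\to0$ limit and land on the Rosen-form metric \eqref{eqn:hpw}, and then constructs the explicit Rosen-to-Brinkmann change of variables via the frame-component functions $f^k_i$, verifying the nontrivial identity \eqref{eqn:finalAij} through the chain \eqref{eqn:f0}--\eqref{eqn:f_id}. That explicit computation is the bulk of the paper's proof and is what justifies the word ``limit'' in the theorem (cf.\ the remark after the proof about \eqref{eqn:star}); it is also precisely the step you flag as your ``main obstacle.'' Your route is shorter and logically defensible if one grants \cite{blau04} in full generality (including the normalization producing the $2\,dv\,dt$ term, which you rightly note must be tracked), but it leaves the actual isometry between the Rosen-coordinate output of Penrose's construction and the Brinkmann-coordinate metric of Definition \ref{def:PWL} entirely inside the citation, which is the one substantive piece of the theorem the paper does not outsource. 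Your closing remark about Fermi coordinates is consistent with the paper but belongs to its separate Section on that topic, not to this proof.
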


\begin{proof}
This proof is lengthy and proceeds in several steps. First, let us recall standard facts about product metrics (see, e.g., \cite[p.~89]{o1983}).  To begin with, a curve $\tilde{\gamma}(t)$ in $I \times M$ will be a geodesic with respect to $-d\tau^2+g$ if and only if its projections onto $I$ and $M$ are geodesics of $-d\tau^2$ and $g$, respectively.  Therefore, $\tilde{\gamma}(t)$ will be a geodesic if and only if it is of the form $(at+b,\gamma(t))$ with $\gamma(t)$ a geodesic of $(M,g)$.  So if $\gamma(t)$ has unit speed in $(M,g)$, then $\tilde{\gamma}(t) \defeq (t,\gamma(t))$ is a lightlike geodesic of $(I \times M,-d\tau^2+g)$ starting at $(0,\gamma(0))$ with $\tilde{\gamma}'(t) = \partial_\tau\big|_t+\gamma'(t)\big|_{\gamma(t)}$.  The first step in our proof is to take Penrose's plane wave limit of $(I \times M,-d\tau^2+g)$ along $\tilde{\gamma}(t)$ in the covariant manner of \cite{blau04}; we claim that doing so will bring us to precisely $\gL$ in \eqref{eqn:PWL}.  Indeed, choose any $g$-orthonormal frame $\{E_2,\dots,E_{n}\}$ parallel along $\gamma(t)$ in $(M,g)$ as in \eqref{eqn:geod} and lift it to a frame $\{\widetilde{E}_2,\dots,\widetilde{E}_{n}\}$ in $(I \times M,-d\tau^2+g)$ along the lightlike geodesic $\tilde{\gamma}(t)$ (the linearly independent set $\{\widetilde{E}_2,\dots,\widetilde{E}_{n},\tilde{\gamma}'\}$ will span $\tilde{\gamma}'^\perp \subseteq TM$; see \cite[Lemma~28, p.~142]{o1983}). Setting $\tilde{g} \defeq -d\tau^2+g$ and denoting by $\widetilde{\nabla}$ its Levi-Civita connection, note that each $\widetilde{E}_i$ is parallel along $\tilde{\gamma}(t)$: $\cdt{\tilde{\gamma}}{\widetilde{E}_i} = \cdt{\partial_{\tau}}{\widetilde{E}_i}+\cdt{\gamma'}{\widetilde{E}_i} = 0+\cd{\gamma'}{E_i} = 0$.  According to \cite{blau04}\,---\,and in perfect analogy to \eqref{eqn:PWL}\,---\,the plane wave limit of $(I \times M,-d\tau^2+g)$ along the lightlike geodesic $\tilde{\gamma}(t)$ will then be
\beqa
\label{eqn:L_Brink}
\text{$\hL^{\scalebox{0.6}{\emph{$\tilde{\gamma}$}}}$} \defeq 2dvdt + \Big(\!\sum_{i,j=2}^{n}\!A^{\scalebox{0.6}{\emph{$\tilde{\gamma}$}}}_{ij}(t)x^ix^j\Big) (dt)^2 + \sum_{i=2}^{n} (dx^i)^2,
\eeqa
where, because the Riemann curvature 4-tensor $\text{Rm}_{\scalebox{0.6}{$\tilde{g}$}}$ satisfies
$$
\text{Rm}_{\scalebox{0.6}{$\tilde{g}$}}(\partial_\tau,\cdot,\cdot,\cdot) = 0 \comma \text{Rm}_{\scalebox{0.6}{$\tilde{g}$}}(X,Y,Z,V) = \text{Rm}_{\scalebox{0.6}{$g$}}(X,Y,Z,V)
$$
for all vector fields $X,Y,Z,V$ that are lifts of vector fields on $M$, we have that
\beqa
\label{eqn:Blau_A}
A^{\scalebox{0.6}{\emph{$\tilde{\gamma}$}}}_{ij}(t) \defeq -\text{Rm}_{\scalebox{0.6}{$\tilde{g}$}}(\widetilde{E}_i,\tilde{\gamma}',\tilde{\gamma}',\widetilde{E}_j)\Big|_{\tilde{\gamma}(t)} = -\text{Rm}_{\scalebox{0.6}{$g$}}(E_i,\gamma',\gamma',E_j)\Big|_{\gamma(t)} = \Aij(t).
\eeqa
Thus the first step. Next, we will find a coordinate transformation in which $\hL^{\scalebox{0.6}{\emph{$\tilde{\gamma}$}}}$ takes on the local form of Penrose's original limit.  To do so, let us first outline Penrose's construction. On $(I \times M,-d\tau^2+g)$, begin by setting up a ``lightlike coordinate system" as in \eqref{eqn:stillL} that is ``centered" on $\tilde{\gamma}(t)$, as follows. First, recall the existence of Riemannian \emph{semigeodesic coordinates}: At each point of $M$, local coordinates $(r,x^2,\dots,x^{n})$ exist in which the integral curves $t \mapsto (t,x^2,\dots,x^{n})$ of $\partial_r$ are geodesics normal to the level sets of $r$:
\beqa
\label{eqn:sg0}
g = (dr)^2 + \sum_{i,j=2}^{n}g_{ij}(r,x^2,\dots,x^{n})dx^idx^j.
\eeqa
It follows that $\partial_r = \text{grad}_{\scalebox{.6}{$g$}}r$ and that $r$ is a local distance function (see, e.g., \cite[Proposition~6.41,~p.~182]{Lee}).  One may further choose such coordinates so that the integral curve $t \mapsto (t,0,\dots,0)$ coincides with the geodesic $\gamma(t)$. Having done so in a neighborhood $\mathcal{U} \subseteq M$, let us restrict $g$ to $\uu$ and $\tilde{g}$ to $I \times \uu$. Since
$
\tilde{g}\big|_{I\times \uu} = -(d\tau)^2+(dr)^2 + \sum_{i,j=2}^{n}g_{ij}(r,x^2,\dots,x^{n})dx^idx^j,
$
a final change of variables, $\tilde{\tau} \defeq \frac{1}{\sqrt{2}}(r+\tau), \tilde{r} \defeq \frac{1}{\sqrt{2}}(r-\tau)$, will put $\tilde{g}$ into the form \eqref{eqn:stillL}:
\beqa
\label{eqn:lor11}
    (\tilde{g}_{\alpha\beta}) = \underbrace{\,\begin{pmatrix}
        0 & 1  & 0 & 0 & \cdots & 0\\
        1 & 0& 0 & 0 & \cdots & 0\\
        0 & 0&  g_{22} & g_{23} & \cdots &  g_{2n}\\
        0 & 0 &  g_{32} & g_{33} & \cdots & g_{3n}\\
        \vdots & \vdots & \vdots & \vdots & \ddots &  \vdots\\
        0 & 0 & g_{n2} & g_{n3} & \cdots & g_{nn}
      \end{pmatrix}\,}_{(\tilde{g}_{\alpha\beta})~\text{in coordinates}~(\tilde{\tau},\tilde{r},x^2,\dots,x^n)} \commas g_{ij}\big((\tilde{\tau}+\tilde{r})/\sqrt{2},x^2,\dots,x^n\big).
\eeqa
Furthermore, our lightlike geodesic $\tilde{\gamma}(t)$, originally in the form $(t,t,0,\dots,0)$ in $I \times \uu$, now has the expression $\tilde{\gamma}(t) = (\sqrt{2}t,0,\dots,0)$ in the coordinates $(\tilde{\tau},\tilde{r},x^2,\dots,x^n)$; i.e., after the affine reparametrization $t \mapsto t/\sqrt{2}$, it is now precisely the integral curve though the origin of the lightlike vector field $\partial_{\tilde{\tau}}$. Finally, let us relabel our coordinate system from $(\tilde{\tau},\tilde{r},x^2,\dots,x^n)$ to $(x^0,x^1,x^2,\dots,x^n)$. The remainder of the proof now proceeds as follows:
\begin{enumerate}[leftmargin=*]
\item[1.] The metric \eqref{eqn:lor11} is now in a state in which to take Penrose's limit as in Penrose's original construction, in \cite{penPW}. As we show in \eqref{eqn:hpw} below, doing so will yield a plane wave metric $\hpw$ in so called \emph{Rosen coordinates}.
\item[2.] Following \cite{blau04}, we will then define a coordinate transformation putting $\hpw$ precisely in the form \eqref{eqn:PWL}, with the $\Aij(t)$'s given by \eqref{eqn:Blau_A} above. Thus $\hpw$ will be equivalent to $\gL$ in our Definition \ref{def:PWL}.
\end{enumerate}

We begin with 1.; although Penrose's construction is presented here in detail, the limit we take is in fact a special simpler case because $\tilde{g} = -d\tau^2+g$ is a time-symmetric Lorentzian metric (for the general case, see \cite{penPW,blau04,aazami}). First, define a new coordinate system $(\tilde{x}^0,\tilde{x}^1,\tilde{x}^2,\dots,\tilde{x}^n)$ via the following diffeomorphism $\varphi_{\scriptscriptstyle \vep}$,
\beqa
\label{eqn:tilde}
(x^0,x^1,x^2,\dots,x^n) \overset{\varphi_{\scriptscriptstyle \vep}}{\mapsto} (\tilde{x}^0,\tilde{x}^1,\tilde{x}^2,\dots,\tilde{x}^n) \defeq \Big(x^0,\frac{x^1}{\vep^2},\frac{x^2}{\vep},\dots,\frac{x^n}{\vep}\Big),
\eeqa
where $\vep > 0$ is a constant; let 
\beqa
\label{eqn:gg_vep}
\tilde{g}_{\scriptscriptstyle \vep} \defeq (\varphi_{\scriptscriptstyle \vep}^{-1})^*\tilde{g}
\eeqa
denote $\tilde{g}$ in these new coordinates.  Since $x^0$ is an affine parameter along the geodesic integral curve of $\partial_0$ through the origin\,---\,which is precisely our lightlike geodesic $\tilde{\gamma}(x^0)$\,---\,observe that the limit $\vep \to 0$ has the effect of ``zooming infinitesimally close" to our geodesic, pushing the remaining coordinates $x^1,\dots,x^n$ out to infinity.  This is precisely the limit that Penrose wishes to take.  However, before taking that limit, Penrose defines another Lorentzian metric, $\gpw$, in the new coordinates $(\tilde{x}^0,\tilde{x}^1,\tilde{x}^2,\dots,\tilde{x}^n)$, by
\beqa
\label{newmetric}
\gpw \defeq 2d\tilde{x}^0d\tilde{x}^1 + \sum_{i,j=2}^n h_{ij}d\tilde{x}^id\tilde{x}^j,
\eeqa
where each component $h_{ij}$ is (strategically) defined as follows:
\beqa
h_{ij}(\tilde{x}^0,\tilde{x}^1,\tilde{x}^2,\dots,\tilde{x}^n) \!\!&\defeq&\!\! \underbrace{\,g_{ij}((\tilde{x}^0+\vep^2\tilde{x}^1)/\sqrt{2},\vep\,\tilde{x}^2,\dots,\vep\,\tilde{x}^n)\,}_{\text{$={g}_{ij}((x^0+x^1)/\sqrt{2},x^2,\dots,x^n)$}}\label{newcomp2}.
\eeqa
Note that as $\vep \to 0$,
\beqa
\label{eqn:all0}
\lim_{\vep \to 0}  h_{ij} = {g}_{ij}(\tilde{x}^0/\sqrt{2},0,0,\dots,0) \comma i,j=2,\dots,n.
\eeqa
The most important feature of the metric $\gpw$ is that it is \emph{homothetic} to $\tilde{g}_{\scriptscriptstyle \vep}$; indeed, via \eqref{eqn:tilde} and \eqref{eqn:gg_vep}, 
$$
\underbrace{\,\tilde{g}_{\scriptscriptstyle \vep}(\partial_{\tilde{x}^i},\partial_{\tilde{x}^j})\,}_{\tilde{g}\left(d\varphi_{\scriptscriptstyle \vep}^{-1}(\partial_{\tilde{x}^i}),d\varphi_{\scriptscriptstyle \vep}^{-1}(\partial_{\tilde{x}^i})\right)}\hspace{-.28in}d\tilde{x}^i \otimes d\tilde{x}^j\bigg|_{(\tilde{x}^0,\tilde{x}^1,\dots,\tilde{x}^n)} = \tilde{g}(\partial_{x^i},\partial_{x^j})dx^i\otimes dx^j\bigg|_{\varphi_{\scriptscriptstyle \vep}^{-1}(\tilde{x}^0,\tilde{x}^1,\dots,\tilde{x}^n)},
$$
where $d\varphi_{\scriptscriptstyle \vep}^{-1}(\partial_{\tilde{x}^1}) = \vep^2\,\partial_{x^1}, d\varphi_{\scriptscriptstyle \vep}^{-1}(\partial_{\tilde{x}^2}) = \vep\,\partial_{x^2}$, etc., which yields
\beqa
dx^0 \otimes dx^1 \!\!&=&\!\! \vep^{2}\,d\tilde{x}^0 \otimes d\tilde{x}^1, \nonumber\\
g_{ij}((x^0+x^1)/\sqrt{2},x^2,\dots,x^n)\,dx^i \otimes dx^j \!\!&\overset{\eqref{newcomp2}}{=}&\!\!\vep^{2}\,h_{ij}d\tilde{x}^i \otimes d\tilde{x}^j,\nonumber
\eeqa
so that, overall,
$$
\gpw = \frac{\tilde{g}_{\scriptscriptstyle \vep}}{\vep^2}\cdot
$$
(In particular, the Levi-Civita connections of $h_{\scriptscriptstyle \vep}$ and $\tilde{g}_{\scriptscriptstyle \vep}$ are equal: $\nabla^{{\scalebox{0.55}{\text{$h_\vep$}}}} = \widetilde{\nabla}$.) Now Penrose takes the limit $\vep \to 0$\,---\,not of $\tilde{g}_{\scriptscriptstyle \vep}$, but rather of $\gpw$:
\beqa
\label{eqn:star}
\lim_{\vep \to 0} \frac{\tilde{g}_{\scriptscriptstyle \vep}}{\vep^{2}} = \lim_{\vep \to 0} \gpw.
\eeqa
Recalling \eqref{newmetric} and \eqref{eqn:all0}, this limit exists and yields the following Lorentzian metric:
$$
\hpw \defeq \lim_{\vep \to 0} \gpw = 2d\tilde{x}^0d\tilde{x}^1 + \sum_{i,j=2}^n g_{ij}(\tilde{x}^0/\sqrt{2},0,\dots,0)d\tilde{x}^id\tilde{x}^j.
$$
As it turns out, this is precisely a plane wave metric in the Rosen coordinates mentioned above. Scaling by $\tilde{x}^0 \mapsto \sqrt{2}\tilde{x}^0$ and $\tilde{x}^1 \mapsto \tilde{x}^1/\sqrt{2}$ puts Penrose's plane wave limit $\hpw$ into the final form
\beqa
\label{eqn:hpw}
\hpw = 2d\tilde{x}^0d\tilde{x}^1+\sum_{i,j=2}^{n}g_{ij}(\tilde{x}^0,0,\dots,0)d\tilde{x}^id\tilde{x}^j.
\eeqa
This takes care of 1.~above; we now finish the proof by showing that $\hpw$ is isometric to $\gL$ in Definition \ref{def:PWL}, with the $\Aij(t)$'s given by \eqref{eqn:Blau_A}. To begin with, we may express the vector fields $\{\widetilde{E}_2,\dots,\widetilde{E}_n\}$ along $\tilde{\gamma}(\tilde{x}^0)$ in the coordinate frame $\{\partial_{\tilde{0}},\partial_{\tilde{1}},\partial_{\tilde{2}},\dots,\partial_{\tilde{n}}\}$ (where each $\partial_{\,\tilde{j}} \defeq \partial/\partial \tilde{x}^j$) as
\beqa
\label{eqn:**}
\widetilde{E}_i = p_i(\tilde{x}^0) \partial_{\tilde{0}} + \sum_{k=2}^nf^k_i(\tilde{x}^0)\partial_{\tilde{k}}\,\Big|_{\tilde{\gamma}(\tilde{x}^0)} \comma i = 2,\dots,n,
\eeqa
for some smooth functions $p_i(\tilde{x}^0),f_i^k(\tilde{x}^0)$. In fact the $(n-1)\times (n-1)$ matrix of functions $(f^k_i)$ is invertible, because the $n-1$ vector fields $\widetilde{E}_i-p_i(\tilde{x}^0)\partial_{\tilde{0}}$ are linearly independent:
\beqa
\label{eqn:fzero}
\underbrace{\,\sum_{k=2}^n a^i\big(\widetilde{E}_i-p_i(\tilde{x}^0)\partial_{\tilde{0}}\big) = 0 \iff \begin{pmatrix} f^2_2& \cdots& f^2_n\\f^3_2&\cdots& f^3_n\\ \vdots&\ddots&\vdots\\f^2_n&\cdots&f^n_n \end{pmatrix}\begin{pmatrix}a^2\\a^3\\\vdots\\a^n\end{pmatrix} = \begin{pmatrix}0\\0\\\vdots\\0\end{pmatrix}\,}_{\text{and the former occurs $\iff a^2 = \cdots = a^n = 0$.}}
\eeqa 
(This is a point to which we shall return below.) Moving on, as $\tilde{\gamma}(\tilde{x}^0) = (\tilde{x}^0,0,\dots,0)$ and recalling each component $g_{ij}(\tilde{x}^0+\vep^2\tilde{x}^1/2,\vep\,\tilde{x}^2,\dots,\vep\,\tilde{x}^n)$ in \eqref{eqn:lor11} (and our final rescaling $\tilde{x}^0 \mapsto \sqrt{2}\tilde{x}^0$ and $\tilde{x}^1 \mapsto \tilde{x}^1/\sqrt{2}$ above), observe that, along $\tilde{\gamma}(\tilde{x}^0) = (\tilde{x}^0,0,\dots,0)$,
\beqa
\label{eqn:h=g}
\tilde{g}_{\scriptscriptstyle \vep} \!\!&=&\!\! \vep^2\Big(2d\tilde{x}^0d\tilde{x}^1+\sum_{i,j=2}^{n}g_{ij}(\tilde{x}^0+\vep^2\tilde{x}^1/2,\vep\,\tilde{x}^2,\dots,\vep\,\tilde{x}^n)d\tilde{x}^id\tilde{x}^j\Big)\nonumber\\
&\Rightarrow&\!\! \tilde{g}_{\scriptscriptstyle \vep}\big|_{\tilde{\gamma}} = \vep^2\hpw\big|_{\tilde{\gamma}}.
\eeqa
Thus $\hpw(\widetilde{E}_i,\widetilde{E}_j)\big|_{\tilde{\gamma}} = \vep^{-2}\tilde{g}_{\scriptscriptstyle \vep}(\widetilde{E}_i,\widetilde{E}_j)\big|_{\tilde{\gamma}} = \vep^{-2}\delta_{ij}$, while the absence of a $\partial_1$-component in \eqref{eqn:**} is due to the fact that
$$
\hpw(\widetilde{E}_i,\partial_{\tilde{0}})\big|_{\tilde{\gamma}} = \hpw(\widetilde{E}_i,\tilde{\gamma}')\big|_{\tilde{\gamma}} = \vep^{-2}\tilde{g}_{\scriptscriptstyle \vep}(\widetilde{E}_i,\tilde{\gamma}')\big|_{\tilde{\gamma}} = 0.
$$
A further consequence of \eqref{eqn:h=g}\,---\,indeed, more generally of the definitions of $\tilde{g}_{\scriptscriptstyle \vep}, h_{\scriptscriptstyle \vep}$, and $\hpw$\,---\,is that each $\widetilde{E}_i$, which was originally $\widetilde{\nabla}$-parallel along $\tilde{\gamma}(\tilde{x}^0) = (\tilde{x}^0,0,\dots,0)$, remains $\nabla^{{\scalebox{0.55}{\text{$\hpw$}}}}$-parallel along it as well. Indeed, it is straightforward to verify that
$\nabla^{{\scalebox{0.55}{\text{$\hpw$}}}}_{\!\partial_{\tilde{0}}}{\partial_{\tilde{0}}} = 0 = \cdt{\partial_{\tilde{0}}}{\partial_{\tilde{0}}}$,
while
$$
\nabla^{{\scalebox{0.55}{\text{$\hpw$}}}}_{\!\partial_{\tilde{0}}}{\partial_{\tilde{k}}}\Big|_{\tilde{\gamma}} = \cdt{\partial_{\tilde{0}}}{\partial_{\tilde{k}}}\Big|_{\tilde{\gamma}} = \frac{1}{2}\sum_{\lambda,l=2}^ng^{\lambda l}\dot{g}_{lk}\partial_\lambda \comma \dot{g}_{lk}\defeq \partial_{\tilde{0}}(g_{lk}(\tilde{x}^0,0,\dots,0)).
$$
Applying these to \eqref{eqn:**} yields
$
\nabla^{{\scalebox{0.55}{\text{$\hpw$}}}}_{\!\tilde{\gamma}'}{\widetilde{E}_i} = \nabla^{{\scalebox{0.55}{\text{$\hpw$}}}}_{\!\partial_{\tilde{0}}}{\widetilde{E}_i}\Big|_{\tilde{\gamma}'} = \cdt{\partial_{\tilde{0}}}{\widetilde{E}_i}\Big|_{\tilde{\gamma}'} = \cdt{\tilde{\gamma}'}{\widetilde{E}_i} = 0,
$
so that
\beqa
\label{eqn:f_terms}
0=\nabla^{{\scalebox{0.55}{\text{$\hpw$}}}}_{\!\tilde{\gamma}'}{\widetilde{E}_i} = \dot{p}_i(\tilde{x}^0)\partial_{\tilde{0}} + \sum_{k=2}^n \big(\dot{f}_i^k(\tilde{x}^0)\partial_{\tilde{k}} + f_i^k(\tilde{x}^0)\nabla^{{\scalebox{0.55}{\text{$\hpw$}}}}_{\!\partial_{\tilde{0}}}{\partial_{\tilde{k}}}\big)\Big|_{\tilde{\gamma}}.
\eeqa
For each $\lambda = 2,\dots,n$, the $\partial_\lambda$-term in \eqref{eqn:f_terms} thus yields the identity
\beqa
\dot{f}_i^\lambda + \frac{1}{2}\sum_{l,k=2}^nf_i^kg^{\lambda l}\dot{g}_{lk} = 0 \!\!&\overset{\times g_{\lambda s}}{\Rightarrow}&\!\! -2\sum_{\lambda =2}^n\dot{f}_i^\lambda g_{\lambda s}\Big|_{\tilde{\gamma}} = \sum_{k=2}^nf_i^k\dot{g}_{sk}\Big|_{\tilde{\gamma}}\label{eqn:f0}\\
&\overset{\times f^s_j}{\Rightarrow}&\!\! -2\sum_{s,\lambda =2}^nf^s_j\dot{f}_i^\lambda g_{\lambda s}\Big|_{\tilde{\gamma}} = \sum_{s,k=2}^nf^s_jf_i^k\dot{g}_{sk}\Big|_{\tilde{\gamma}}.\label{eqn:f1}
\eeqa
And this in turn yields the identity
\beqa
0 \!\!&=&\!\!\partial_{\tilde{0}}(\hpw(\widetilde{E}_i,\widetilde{E}_j))\big|_{\tilde{\gamma}}\nonumber\\
&=&\!\!\partial_{\tilde{0}}(\vep^{-2}\tilde{g}_{\scriptscriptstyle \vep}(\widetilde{E}_i,\widetilde{E}_j))\big|_{\tilde{\gamma}}\nonumber\\
&\overset{\eqref{eqn:**}}{=}&\!\! \sum_{k,l=2}^n\partial_{\tilde{0}}(f_i^kf_j^lg_{kl})\Big|_{\tilde{\gamma}}\nonumber\\
&=&\!\! \sum_{k,l=2}^n (\dot{f}_i^kf_j^lg_{kl} + f_i^k\dot{f}_j^lg_{kl} + f_i^kf_j^l\dot{g}_{kl})\Big|_{\tilde{\gamma}}\nonumber\\
&\overset{\eqref{eqn:f1}}{=}&\!\! \sum_{k,l=2}^n (\dot{f}_i^kf_j^lg_{kl} + f_i^k\dot{f}_j^lg_{kl} -2f^k_j\dot{f}_i^l g_{kl})\Big|_{\tilde{\gamma}}\nonumber\\
&\Rightarrow&\!\! \sum_{k,l=2}^n \dot{f}_i^k f^l_jg_{kl}\Big|_{\tilde{\gamma}} = \sum_{k,l=2}^n f_i^k\dot{f}_j^lg_{kl}\Big|_{\tilde{\gamma}}.\label{eqn:f_id}
\eeqa
We are now in a position to define the isometry between Penrose's plane wave limit $\hpw$ and our Riemannian plane wave limit $\gL$. It involves the same coordinate transformation between Rosen coordinates \eqref{eqn:hpw} and Brinkmann coordinates \eqref{eqn:pp} as defined in \cite{blau04}, which transformation is defined via the functions $f^k_i$ in \eqref{eqn:**} as follows:
$$
\def\arraystretch{1.4}
\left\{\begin{array}{lcl}
\tilde{x}^0 &\defeq& t,\\
\tilde{x}^1 &\defeq& v-\frac{1}{2}\sum_{i,j,k,l=2}^n g_{kl}(t,0,\dots,0)\dot{f}_i^k(t)f^l_j(t)x^ix^j,\\
\tilde{x}^i &\defeq& \sum_{k=2}^nf_k^i(t)x^k.
\end{array}\right.
$$
Recall \eqref{eqn:fzero} and our discussion above it: That the $(n-1)\times (n-1)$ matrix of functions $(f^k_i)$ is invertible implies that the Jacobian matrix between $(v,t,x^2,\dots,x^n)$ and $(\tilde{x}^0,\tilde{x}^1,\tilde{x}^2,\dots,\tilde{x}^n)$ is invertible, so that these coordinates are well defined; note also that $\tilde{\gamma}(t) = (0,t,0,\dots,0)$ in these coordinates.
Now, if we substitute $(v,t,x^2,\dots,x^n)$ into \eqref{eqn:hpw}, and avail ourselves of the identities \eqref{eqn:f_id} and $\hpw(\widetilde{E}_i,\widetilde{E}_j) = \sum_{k,l=2}^n f_i^kf_j^lg_{kl} = \vep^{-2}\delta_{ij}$, we will obtain, after some simplification and a final rescaling $v\mapsto \vep v,t\mapsto \vep^{-1}t,x^i\mapsto \vep x^i$,
$$
\hpw = 2dvdt +\Big(\!\!-\!\!\!\!\!\!\sum_{i,j,k,l=2}^n\!\!\!\!\big(\dot{g}_{kl}\dot{f}^k_if^l_j+g_{kl}\ddot{f}^k_if^l_j\big)x^ix^j\Big)(dt)^2 + \sum_{i=2}^n (dx^i)^2.
$$
To complete the proof, all that remains to show, therefore, is that
\beqa
\label{eqn:finalAij}
\Aij(t) = -\!\!\sum_{k,l=2}^n \!\!\big(\dot{g}_{kl}\dot{f}^k_if^l_j+g_{kl}\ddot{f}^k_if^l_j\big),
\eeqa
with $\Aij(t)$ given by \eqref{eqn:Blau_A}. Indeed, recalling that $\tilde{\gamma}'(\tilde{x}^0) = \partial_{\tilde{0}}\big|_{\tilde{\gamma}(\tilde{x}^0)}$ and $\cdt{\partial_{\tilde{0}}}{\partial_{\tilde{k}}} = \frac{1}{2}\sum_{\lambda,l=2}^ng^{\lambda l}\dot{g}_{lk}\partial_\lambda$ (with each $g_{lk}(\tilde{x}^0+\vep^2\tilde{x}^1/2,\vep\,\tilde{x}^2,\dots,\vep\,\tilde{x}^n)$), we proceed as follows:
\beqa
A^{\scalebox{0.6}{\emph{$\tilde{\gamma}$}}}_{ij}(\tilde{x}^0) \!\!&=&\!\! -\text{Rm}_{\scalebox{0.6}{$\tilde{g}_{\scriptscriptstyle \vep}$}}(\widetilde{E}_i,\tilde{\gamma}',\tilde{\gamma}',\widetilde{E}_j)\Big|_{\tilde{\gamma}(\tilde{x}^0)}\nonumber\\
&=&\!\! -\!\sum_{k,l=2}^n f_i^kf_j^l\underbrace{\,\text{Rm}_{\scalebox{0.6}{$\tilde{g}_{\scriptscriptstyle \vep}$}}(\partial_{\tilde{k}},\partial_{\tilde{0}},\partial_{\tilde{0}},\partial_{\tilde{l}})\,}_{\text{$\cdt{\partial_{\tilde{k}}}{\!\!\cancelto{0}{\cdt{\partial_{\tilde{0}}}{\partial_{\tilde{0}}}}} - \cdt{\partial_{\tilde{0}}}{\!\!\cdt{\partial_{\tilde{k}}}{\partial_{\tilde{0}}}}$}}\Big|_{\tilde{\gamma}(\tilde{x}^0)}\nonumber\\
&=&\!\! \sum_{k,l=2}^n f_i^kf_j^l \Big[\underbrace{\,\partial_{\tilde{0}}\big(\tilde{g}_{\scriptscriptstyle \vep}(\cdt{\partial_{\tilde{k}}}{\partial_{\tilde{0}}},\partial_{\tilde{l}})\big)\,}_{\text{$\frac{1}{2}\ddot{g}_{lk}$}} - \underbrace{\,\tilde{g}_{\scriptscriptstyle \vep}(\cdt{\partial_{\tilde{k}}}{\partial_{\tilde{0}}},\cdt{\partial_{\tilde{0}}}{\partial_{\tilde{l}}})\,}_{\text{$\frac{1}{4}\sum_{s,p=2}^n g^{sp}\dot{g}_{sk}\dot{g}_{pl}$}}\Big]\Big|_{\tilde{\gamma}(\tilde{x}^0)}\nonumber\\
&\overset{\eqref{eqn:f0}}{=}&\!\! \sum_{k,l=2}^n \Big[\frac{1}{2}f_i^kf_j^l\ddot{g}_{kl} + \frac{1}{2}\dot{f}_i^kf_j^l\dot{g}_{kl}\Big]\Big|_{\tilde{\gamma}(\tilde{x}^0)}\nonumber\\
&\overset{\partial_{\tilde{0}}\eqref{eqn:f1}}{=}&\!\! \frac{1}{2}\!\sum_{k,l=2}^n \Big[f_j^l(-2\ddot{f}_i^kg_{kl}-3\dot{f}_i^k\dot{g}_{kl}) + \dot{f}_i^kf_j^l\dot{g}_{kl}\Big]\Big|_{\tilde{\gamma}(\tilde{x}^0)}\nonumber\\
&=&\!\! -\!\!\sum_{k,l=2}^n \!\!\big(\dot{g}_{kl}\dot{f}^k_if^l_j+g_{kl}\ddot{f}^k_if^l_j\big)\Big|_{\tilde{\gamma}(\tilde{x}^0)}.\nonumber
\eeqa
Since $A^{\scalebox{0.6}{\emph{$\tilde{\gamma}$}}}_{ij}(\tilde{x}^0) = \Aij(\tilde{x}^0)$ by \eqref{eqn:Blau_A}, since $\tilde{x}^0 = t$, and finally because $\tilde{\gamma}(\tilde{x}^0) = (0,\tilde{x}^0,0,\dots,0) = (0,t,0,\dots,0) = \tilde{\gamma}(t)$, this confirms \eqref{eqn:finalAij} and completes the proof.
\end{proof}

Note that \eqref{eqn:star} now justifies the terminology ``plane wave \emph{limit}" for $\gL$.

\subsection{Relation to Fermi coordinates.}\label{subsec:Fermi} It is also worthwhile discussing the local nature of the Riemannian plane wave limit $\gL$ in another sense, complementing \cite{Blaue_Fermi}. In particular, we now show that $\gL$ can also be obtained locally from \emph{Fermi} coordinates along $\gamma(t)$.  Thus, let $\{E_1,\dots,E_{n-1}\}$ be an orthonormal frame along $\gamma(t)$ as in \eqref{eqn:geod}.  Then it is well known that local ``Fermi" coordinates $(t,x^1,\dots,x^{n-1})$ can be found in a neighborhood $\mathcal{U}$ satisfying the following properties:
\begin{enumerate}[leftmargin=*]
\item[i.] The portion of $\gamma(t)$ within $\mathcal{U}$, assumed to comprise an embedded 1-submanifold, has the expression $\gamma(t) = (t,0,\dots,0)$, with $\partial_t\big|_{\gamma(t)} = \gamma'(t)$ and $\partial_i\big|_{\gamma(t)} = E_i\big|_{\gamma(t)}$, so that $g\big|_{\gamma(t) \cap \mathcal{U}} = \text{diag}(1,1,\dots,1)$.

\item[ii.] For every $\gamma(t_0) \in \mathcal{U}$ and every $v\defeq \sum_{i=1}^{n-1}v^iE_i \in T_{\gamma(t_0)}M$ orthogonal to $\gamma'(t_0)$, the geodesic $\gamma_v(s)$ starting at $\gamma(t_0)$ in the direction $v$ has the coordinate expression $\gamma_v(s) = (t_0,sv^1,\dots,sv^{n-1})$ (see, e.g., \cite[Proposition~5.26,~p.~136]{Lee}; from this it follows that $\Gamma^\alpha_{ij}\big|_{\gamma(t)} = 0$ and that $\partial_kg_{ij}\big|_{\gamma(t)} = 0$ for all $i,j,k=1,\dots,n-1$ and $\alpha = t,1,\dots,n-1$).

\item[iii.] Finally, using that $J(s) \defeq \partial_t\big|_{\gamma_v(s)}$ is a Jacobi field along $\gamma_v(s)$ satisfying $J'(0) = \sum_{i=1}^{n-1}v^i\cd{\partial_i}{\partial_t}\big|_{\gamma_v(0)}\! =\! \sum_{i=1}^{n-1}v^i\cd{\partial_t}{\partial_i}\big|_{\gamma(t_0)}\! =\! \sum_{i=1}^{n-1}v^i\cd{\gamma'(t_0)}{E_i} = 0$, one can show that the metric component $g_{tt}$ Taylor expands as
\beqa
\hspace{.25in}g_{tt}\big|_{(t,x^1,\dots,x^{n-1})} \!\!&=&\!\! 1 - \sum_{i,j=1}^{n-1}(\text{Rm}_{\scalebox{0.6}{$g$}})_{ittj}\Big|_{(t,0,\dots,0)}x^ix^j + \mathcal{O}(|x|^3).\label{eqn:gtt0}
\eeqa
(Alternatively, see \cite[p.~186ff.]{gray}.) Since $\partial_t|_{(t,0,\dots,0)} = \gamma'(t)$ and $\partial_i\big|_{(t,0,\dots,0)} = E_i\big|_{\gamma(t)}$, \eqref{eqn:gtt0} resembles its counterpart in \eqref{eqn:PWL}:
\beqa
\label{eqn:gtt}
g_{tt}\big|_{(t,x^1,\dots,x^{n-1})} = 1 + \sum_{i,j=1}^{n-1}\Aij(t)x^ix^j + \mathcal{O}(|x|^3).
\eeqa
\end{enumerate}
This is more than just a resemblance.  Indeed, let us now form the Lorentzian manifold $(I \times M,\hL\defeq -d\tau^2+g)$, lift $\gamma(t)$ to the lightlike geodesic $\tilde{\gamma}(t) \defeq (t,\gamma(t))$ as in Theorem \ref{prop:Blau}, and finally lift the Fermi coordinates $(t,x^1,\dots,x^{n-1})$ to $(\tau,t,x^1,\dots,x^{n-1})$ on $I \times M$.  Observe that with respect to these coordinates, $\tilde{\gamma}(t) = (t,t,0,\dots,0)$ and $\hL\big|_{(t,t,0,\dots,0)} = \text{diag}(-1,1,\dots,1)$.
To avoid confusion in what follows, let us relabel the affine parameter as $\tilde{\gamma}(s) = (s,s,0,\dots,0)$.  To arrive at \eqref{eqn:PWL}, define new coordinates $(v,u,x^1,\dots,x^{n-1})$ by setting
$
v \defeq \frac{1}{2}(t-\tau), u \defeq \frac{1}{2}(t+\tau).
$
In these coordinates, $\tilde{\gamma}(s) = (0,s,0,\dots,0)$ is an integral curve of $\partial_u$.  Setting $s=u$, we now extend $\hL$ to the entire domain of $(v,u,x^1,\dots,x^{n-1})$ by Taylor expanding the components of $g$ as above\,---\,but with the following stipulation: \emph{We will Taylor expand only the component $g_{uu}$, leaving unchanged all other components from the values they had along $\tilde{\gamma}(s)$.}  (By expanding only in directions parallel to $\tilde{\gamma}(u)$; i.e., only along the integral curves of $\partial_u$, we are mimicing Penrose's construction of ``zooming infinitesimally close" to the lightlike geodesic $\tilde{\gamma}(s)$.) As $\partial_u = \frac{1}{2}(\partial_t+\partial_\tau)$ and $\partial_v = \frac{1}{2}(\partial_t-\partial_\tau)$, we thus have $(\hL)_{vv} = (\hL)_{vj} = (\hL)_{uj} = 0, (\hL)_{vu} = \frac{1}{2}$, while
$$
(\hL)_{uu} = \frac{1}{4}\hL(\partial_t+\partial_\tau,\partial_t+\partial_\tau) \overset{\eqref{eqn:gtt}}{=} \frac{1}{4}\sum_{i,j=1}^{n-1}\Aij\Big|_{(0,u,0,\dots,0)}x^ix^j + \mathcal{O}(|x|^3).
$$
Thus with our stipulation in place, $\hL$ locally takes the form
$$
\hL\big|_{(v,u,x^1,\dots,x^{n-1})} = dvdu + \frac{1}{4}\Big(\!\sum_{i,j=1}^{n-1}\!\Aij(u)x^ix^j\Big) (dt)^2 + \sum_{i=1}^{n-1} (dx^i)^2.
$$
After scaling via $u \mapsto 2t$, this is precisely the plane wave limit metric $\gL$ in \eqref{eqn:PWL} (recall Lemma \ref{lemma:same}).  This shows that Fermi coordinates along the geodesic $\gamma(t)$ will yield $\gL$ in \emph{Brinkmann coordinates} \eqref{eqn:pp}.  Now, if we had instead used Fermi coordinates with respect to a hypersurface \emph{orthogonal} to $\gamma(t)$, then locally $g = (dt)^2 + \sum_{i,j=1}^{n-1}g_{ij}(t,x^1,\dots,x^{n-1})dx^idx^j$, with $\gamma(t) = (t,0,\dots,0)$ now an integral curve of $\partial_t$ (see \cite[p.~183]{Lee}; in the case of a hypersurface, the orthogonality relation is preserved even off of it).  If we apply Penrose's original scaling argument to $-d\tau^2+g$, then we would have arrived at $\gL$ in so called \emph{Rosen coordinates}; see \cite{aazami}.  Of course, the virtue of Definition \ref{def:PWL}, which follows \cite{blau04}, is that it does not rely on any local coordinates of $(M,g)$.

\subsection{Plane wave limit of vector fields.}\label{subsec:sanchez} To close this section, we briefly present a generalization of the Riemannian plane wave limit. Thus, let us suppose that our unit-speed geodesic $\gamma(t)$ is in fact an integral curve of a smooth unit-length vector field $Z$ with geodesic flow: $\cd{Z}{Z} = 0$.  Let $Z^{\perp} \subseteq TM$ denote its orthogonal complement, which may or may not be integrable, and define the linear endomorphism
$$
\AZ\colon Z^{\perp} \lra Z^{\perp} \comma X \mapsto \AZ(X) \defeq -\cd{X}{Z}.
$$
Now let $\{E_1,\dots,E_{n-1}\}$ be a local orthonormal frame orthogonal to $Z$ and parallel along its integral curves: $\cd{Z}{E_i} = 0$ for each $i=1,\dots,n-1$ (cf., e.g., \cite[p.~237]{petersen}).  Relative to this frame, $\AZ$ has (generally non-symmetric) matrix entries $(\AZ)_{ij} = -g(\cd{E_j}{Z},E_i)$; their derivatives along $Z$ satisfy
\beqa
Z(\AZ)_{ij} &=&\!\! -g(\cd{Z}{\cd{E_j}{Z}},E_i) - g(\cd{E_j}{Z},\cancelto{0}{\cd{Z}{E_i}})\nonumber\\
&=&\!\! -\text{Rm}_{\scalebox{0.6}{$g$}}(Z,E_j,Z,E_i) - g(\cd{E_j}{\cancelto{0}{\cd{Z}{Z}}},E_i) - g(\cd{[Z,E_j]}{Z},E_i).\nonumber
\eeqa 
As $[Z,E_j] = \sum_{k=1}^{n-1}(\AZ)_{kj}E_k$, and as $\text{Rm}_{\scalebox{0.6}{$g$}}(Z,E_j,Z,E_i)\big|_{\gamma(t)} = \Aij(t)$, we thus arrive at the following Bochner-type formula relating $\Aij(t)$ and $\AZ$:
\beqa
\label{eqn:AZ2}
\Aij(t) = -\frac{d(\AZ)_{ij}}{dt} + \sum_{k=1}^{n-1}(\AZ)_{ik}(\AZ)_{kj} \Big|_{\gamma(t)} = -\frac{d(\AZ)_{ij}}{dt} + (\AZ^2)_{ij}\Big|_{\gamma(t)}.
\eeqa
Thus, if $Z$ and $\{E_1,\dots,E_{n-1}\}$ as above are present, then \eqref{eqn:AZ2} directly relates the curvature of the plane wave limit $\gL$ to geometric properties of the flow of $Z$, namely, its divergence (the trace of $\AZ$), its ``twist" (the anti-symmetric part of $\AZ$), and its ``shear" (the trace-free symmetric part of $\AZ$).

\section{A semi-Riemannian plane wave limit}
\label{sec:hereditary}

\subsection{semi-Riemannian plane wave limit.} We now generalize to the case of a semi-Riemannian $n$-manifold $(M,g)$ of any signature. Let $\gamma(t)$ be a maximal geodesic in $(M,g)$ of any causal character:
\beqa
\left\{\begin{array}{ccc}
\text{``spacelike"} &\text{if}& \text{$g(\gamma',\gamma') > 0$},\\
\text{``timelike"} &\text{if}& \text{$g(\gamma',\gamma') < 0$},\\
\text{``lightlike"} &\text{if}& \text{$g(\gamma',\gamma') = 0$}.
\end{array}\right.\nonumber
\eeqa
Whatever the index of $g$, each orthogonal complement $\gamma'(t)^{\perp} \subseteq T_{\gamma(t)}M$ always has dimension $n-1$ (see \cite[Lemma~22,~p.~49]{o1983}). Depending on the causality of the geodesic, however, the following is true:

\begin{lemma}
\label{lemma:r}
Let $\gamma(t)$ be a geodesic of a semi-Riemannian manifold $(M,g)$ with index $\nu$. At $(T_{\gamma(0)}M,g|_{\gamma(0)})$, choose an orthonormal frame $\{E_1,\dots,E_{r}\} \subseteq T_{\gamma(0)}M$ orthogonal to $\gamma'(0)$ and parallel transport it along $\gamma(t)$, where $r \leq n-1$ is the maximal number of orthonormal vectors in $\gamma'(0)^{\perp}$ possible. Then the following is true:
\begin{enumerate}[leftmargin=*]
\item[1.] If $\gamma'(0)$ is spacelike, then $r=n-1$ and $\{E_1,\dots,E_{r}\}$ has index $\nu$.
\item[2.] If $\gamma'(0)$ is timelike, then $r=n-1$ and $\{E_1,\dots,E_{r}\}$ has index $\nu-1$.
\item[3.] If $\gamma'(0)$ is lightlike, then $r=n-2$ and $\{E_1,\dots,E_{r}\}$ has index $\nu-1$.
\end{enumerate}
\end{lemma}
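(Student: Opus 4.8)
The plan is to reason case-by-case on the causal character of $\gamma'(0)$, exploiting that parallel transport along $\gamma$ is a linear isometry of tangent spaces, so it suffices to work entirely at the single point $p \defeq \gamma(0)$ with the inner product $g|_p$ of index $\nu$. In each case the orthogonal complement $\gamma'(0)^\perp$ inherits a (possibly degenerate) scalar product, and I want to identify its maximal number $r$ of pairwise orthonormal vectors together with the index of any such maximal orthonormal set.

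First, the spacelike case. If $v \defeq \gamma'(0)$ is spacelike, normalize so $g(v,v)=1$. Then $\RR v$ is a nondegenerate positive-definite line, so $T_pM = \RR v \oplus v^\perp$ is an orthogonal direct sum with $v^\perp$ nondegenerate of dimension $n-1$. A nondegenerate inner product space of dimension $n-1$ admits an orthonormal basis of full size, so $r = n-1$; and since the index of $\RR v$ is $0$, additivity of the index under orthogonal direct sums forces the index of $v^\perp$ to be $\nu - 0 = \nu$. The timelike case is identical in structure: if $g(v,v) = -1$ then $\RR v$ is a nondegenerate negative-definite line of index $1$, $v^\perp$ is again nondegenerate of dimension $n-1$, so $r = n-1$, and additivity gives index of $v^\perp$ equal to $\nu - 1$.

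The lightlike case is where the real work lies, and I expect it to be the main obstacle since now $v^\perp$ is degenerate: $g(v,v) = 0$ with $v \neq 0$ means $v \in v^\perp$, so $v$ lies in the radical of the restricted form. The plan is to write $v^\perp = \RR v \oplus W$ where $W$ is any complementary subspace; one checks (using that the radical of $g|_{v^\perp}$ is exactly $\RR v$, which follows because $v$ is lightlike and $g$ is nondegenerate on $T_pM$) that $g|_W$ is nondegenerate of dimension $n-2$. Hence a maximal orthonormal set in $v^\perp$ cannot use any vector with a nonzero $v$-component in an essential way, and its size is exactly $r = \dim W = n-2$. For the index: pick a lightlike $w$ with $g(v,w) = 1$, so that $\mathrm{span}\{v,w\}$ is a nondegenerate hyperbolic plane of index $1$, and then $T_pM = \mathrm{span}\{v,w\} \oplus (\{v,w\})^\perp$; one verifies $(\{v,w\})^\perp$ is isomorphic to $W$ as an inner product space, so by additivity the index of $W$ is $\nu - 1$. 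Since any maximal orthonormal frame in $v^\perp$ is an orthonormal basis of (a subspace isometric to) $W$, it has index $\nu - 1$, completing the case.

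Finally I would remark that all of the above takes place at $p = \gamma(0)$, and parallel transport $P_t \colon T_pM \to T_{\gamma(t)}M$ is a linear isometry with $P_t(\gamma'(0)) = \gamma'(t)$ (since $\gamma$ is a geodesic), hence carries an orthonormal frame for $\gamma'(0)^\perp$ of the asserted size and index to one for $\gamma'(t)^\perp$ with the same size and index; so the conclusions propagate along all of $\gamma$, as claimed. The only genuinely delicate point to get right is the lightlike bookkeeping — identifying the radical of $g|_{v^\perp}$ and checking that the hyperbolic-plane complement is isometric to $W$ — and I would present that with the standard choice of a dual lightlike vector $w$ rather than by an ad hoc computation.
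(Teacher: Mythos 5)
Your proof is correct. For the spacelike and timelike cases it coincides with the paper's argument: $\mathrm{span}\{\gamma'(0)\}$ is nondegenerate, so $T_pM$ splits orthogonally and the index is additive, giving $r=n-1$ and index $\nu$ or $\nu-1$ according to the sign of $g(v,v)$.

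The lightlike case is where your route diverges, mildly, from the paper's. The paper proceeds by an explicit adapted basis: it chooses an orthogonal basis $\{e_1,\dots,e_n\}$ with $e_1,\dots,e_\nu$ timelike and $\gamma'(0)=e_1+e_{\nu+1}$, and then simply reads off $\gamma'(0)^{\perp}=\mathrm{span}\{e_1+e_{\nu+1},e_2,\dots,e_\nu,e_{\nu+2},\dots,e_n\}$, from which $r=n-2$ and the index $\nu-1$ are visible by inspection. You instead argue invariantly: identify the radical of $g|_{v^{\perp}}$ as $\mathbb{R}v$ (using $(v^{\perp})^{\perp}=\mathbb{R}v$), conclude any complement $W$ of the radical in $v^{\perp}$ is nondegenerate of dimension $n-2$, and compute its index by splitting off the hyperbolic plane $\mathrm{span}\{v,w\}$ for a dual null vector $w$. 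Both are standard and correct; the paper's version is shorter and fully concrete, while yours makes explicit two points the paper leaves implicit\,---\,that the span of \emph{any} orthonormal set is nondegenerate and hence misses the radical (so $r\le n-2$ for every choice of frame, not just the exhibited one), and that all maximal orthonormal sets have the same index because every nondegenerate complement of the radical is isometric to $v^{\perp}/\mathbb{R}v$. The one step you gesture at but should justify if writing this up is the existence of a lightlike $w$ with $g(v,w)=1$: take any $u$ with $g(v,u)=1$ and set $w\defeq u-\tfrac{1}{2}g(u,u)\,v$.
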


\begin{proof}
If $\gamma'(0)$ is spacelike or timelike, then $W \defeq \text{span}\{\gamma'(0)\} \subseteq T_{\gamma(0)}M$ is nondegenerate, hence so is $W^{\perp}$ and $T_{\gamma(0)}M = W \oplus W^\perp$, so that the indices of $W$ and $W^{\perp}$ sum to $\nu$ (see \cite[Lemmas~23~\&~26,~p.~49~\&~51]{o1983}). This proves 1.~and 2. If $\gamma'(0)$ is lightlike, then $W$ and $W^\perp$ are both degenerate. With respect to any orthogonal basis $\{e_1,\dots,e_\nu,e_{\nu+1},\dots,e_n\} \subseteq T_{\gamma(0)}M$ with $e_1,\dots,e_\nu$ timelike and $\gamma'(0) = e_1+e_{\nu+1}$, we may represent $\gamma'(0)^{\perp}$ as $\text{span}\{e_1+e_{\nu+1},e_2,\dots,e_\nu,e_{\nu+2},\dots,e_n \}$. This proves 3.
\end{proof}

With that established, we may once again define along $\gamma(t)$ the ``wave profile" functions \eqref{eqn:geod}, but this time, with Lemma \ref{lemma:r} in mind, we do so as follows,
\beqa
\label{eqn:geod2}
\Aij(t) \defeq -\vep_i|\vep_j|\,\text{Rm}_{\scalebox{0.6}{\emph{g}}}(E_i,\gamma',\gamma',E_j)\Big|_{\gamma(t)} \comma i,j=1,\dots,r,
\eeqa
where $\vep_i \defeq g(E_i,E_i) = \pm1$. Although \eqref{eqn:geod2} incorporates \eqref{eqn:geod}, do observe that this $\Aij(t)$ is not symmetric in general: $\Aij(t) = -A^{\scalebox{0.6}{\emph{$\gamma$}}}_{ji}(t)$ if $E_i$ and $E_j$ do not have the same causal character.  Nevertheless, we now define the plane wave limit of $(M,g)$ along $\gamma(t)$ just as we did in Definition \ref{def:PWL}:

\begin{defn}[semi-Riemannian plane wave limit]
\label{def:PWL2}
Let $(M,g)$ be a semi-Riemannian $n$-manifold \emph{(}$n\geq3$\emph{)} and $\gamma(t)$ a maximal geodesic with domain $I \subseteq \RR$.  The Lorentzian metric \emph{$\gL$} defined on $\RR \times I \times \RR^{r} \subseteq \RR^{r+2} = \{(v,t,x^1,\dots,x^{r})\}$ by
\beqa
\label{eqn:PWL2}
\text{\emph{$\gL$}} \defeq 2dvdt + \Big(\!\sum_{i,j=1}^{r}\!\Aij(t)x^ix^j\Big) (dt)^2 + \sum_{i=1}^{r} (dx^i)^2,
\eeqa
where
$$
r \defeq \left\{\begin{array}{ll}
n-1 &\text{if $\gamma'$ is spacelike or timelike},\\
n-2 &\text{if $\gamma'$ is lightlike},
\end{array}\right.
$$
with the $\Aij(t)$'s defined via any $g$-orthonormal frame $\{E_1,\dots,E_{r}\} $ parallel along $\gamma(t)$ as in \eqref{eqn:geod2}, is the \emph{plane wave limit of $(M,g)$ along $\gamma$.}
\end{defn}

Note that, despite the additional $\vep_i|\vep_j|$ factors in \eqref{eqn:geod2}, $\gL$ remains an isometric invariant of $(M,g)$ (recall Lemma \ref{lemma:iso}). Having said that, the curvature components of $\gL$ itself now satisfy
\beqa
\text{Rm}_{\scalebox{0.6}{$\gL$}}(\partial_i,\partial_t,\partial_t,\partial_j) \!\!&\overset{\eqref{eqn:psec}}{=}&\!\! -\frac{H_{ij}}{2}\nonumber\\
&\overset{\eqref{eqn:PWL2}}{=}&\!\! -\frac{1}{2}(\Aij(t)+A^{\scalebox{0.6}{\emph{$\gamma$}}}_{ji}(t))\nonumber\\
&=&\!\! \left\{\begin{array}{ll}
\vep_i|\vep_j|\,\text{Rm}_{\scalebox{0.6}{\emph{g}}}(E_i,\gamma',\gamma',E_j) &\text{if $\vep_i=\vep_j$},\\
0 &\text{if $\vep_i\neq \vep_j$}.
\end{array}\right.\label{eqn:some0}
\eeqa
This contrasts with the Riemannian case \eqref{eqn:crucial}. The reason for this our choice of $\Aij(t)$ will become clear in Proposition \ref{prop:conj} below. Indeed, although it is not immediately clear, what \eqref{eqn:geod2} has in fact distinguished is the following class of geodesics, a class that includes all geodesics on Riemannian manifolds as well as timelike and lightlike geodesics on Lorentzian manifolds:

\begin{defn}
\label{def:cc}
A geodesic $\gamma(t)$ in a semi-Riemannian manifold $(M,g)$ is \emph{causally independent} if there exists a parallel frame $\{E_1,\dots,E_r\}$ along $\gamma(t)$ as in Lemma \ref{lemma:r} such that \emph{$\text{Rm}_{\scalebox{0.6}{$g$}}(E_i,\gamma',\gamma',E_j)\big|_{\gamma(t)} = 0$} whenever $E_i,E_j$ do not have the same causal character.
\end{defn}

Observe that if $(M,g)$ is Riemannian, then all geodesics are causally independent by default; if $(M,g)$ is Lorentzian, then the same is true for all timelike and lightlike geodesics. However, spacelike geodesics in Lorentzian manifolds are not, in general, causally independent, nor are spacelike, timelike, or lightlike geodesics on semi-Riemannian manifolds of arbitrary signature. Now, if a geodesic $\gamma(t)$ is causally independent, and if $J$ is a Jacobi field along $\gamma(t)$ of the form $J(t) = \sum_{i=1}^rJ^i(t)E_i$ for some smooth functions $J^1(t),\dots,J^r(t)$, then the significance of Definition \ref{def:cc} can be seen by writing the Jacobi equation $J''+R_{\scalebox{0.6}{$g$}}(J,\gamma')\gamma'=0$ in matrix form, where we are assuming here that the first $k$ vectors in $\{E_1,\dots,E_r\}$ are the timelike ones:
$$
\begin{pmatrix}
\ddot{J}^1\\
\ddot{J}^2\\
\vdots\\
\ddot{J}^r\\
\end{pmatrix} = \begin{pmatrix} \underbrace{\,\text{Rm}_{\scalebox{0.6}{$g$}}(E_i,\gamma',\gamma',E_j)\,}_{\text{$k \times k$ matrix of timelike $E_i$'s}} & O\\ &  \\O & \underbrace{\,\text{Rm}_{\scalebox{0.6}{$g$}}(E_i,\gamma',\gamma',E_j)\,}_{\text{$(r-k) \times (r-k)$ matrix of spacelike $E_i$'s}}\end{pmatrix}\begin{pmatrix}
J^1\\
J^2\\
\vdots\\
J^r\\
\end{pmatrix}\cdot
$$
Another way of appreciating the ``block separability" shown here is via $\gamma(t)$'s index form. Indeed, let $\gamma\colon[a,b]\lra M$ be a causally independent geodesic segment of a semi-Riemannian manifold $(M,g)$ and consider its corresponding index form; let us assume here that $\gamma$ is unit-timelike.  Recall that the \emph{index form} of $\gamma$ is the symmetric bilinear form defined on the (infinite-dimensional) vector space
\beqa
V_0^{\perp} \defeq \{\text{piecewise smooth vector fields along $\gamma\,:\,$}&&\nonumber\\&&\hspace{-1in}g(V,\gamma') = 0\ , V(a) = V(b) = 0\}\nonumber
\eeqa
by 
$$
I(V,W) \defeq -\!\int_a^b \big(g(V',W') - \text{Rm}_{\scalebox{0.6}{$g$}}(V,\gamma',\gamma',W)\big)dt \comma V,W \in V_0^{\perp}.
$$
Writing $V = \sum_{i=1}^r V^iE_i$, we may then identify $V_t \defeq \sum_{i=1}^k V^iE_i$ and $V_s \defeq \sum_{i=k+1}^r V^iE_i$ as $V$'s ``timelike" and ``spacelike" parts, respectively; doing the same for $W$ yields $W = W_t+W_s$. Then the causal independence of $\gamma(t)$ is easily seen to yield
\beqa
I(V,W) \!\!&=&\!\! -\!\int_a^b\big(g(V'_t,W'_t) - \text{Rm}_{\scalebox{0.6}{$g$}}(V_t,\gamma',\gamma',W_t)\big)dt \nonumber\\
&&\ \ -\!\int_a^b\big(g(V'_s,W'_s) - \text{Rm}_{\scalebox{0.6}{$g$}}(V_s,\gamma',\gamma',W_s)\big)dt\nonumber\\
&=&\!\! I(V_t,W_t) + I(V_s,W_s),\label{eqn:2-to-1}
\eeqa
so that the index form necessarily separates into timelike and spacelike parts as well. This suggests that a Morse Index Theorem should hold for such geodesics\,---\,even given the pathological conjugate-point behavior that holds in the semi-Riemannian case in general, as was shown in \cite{helfer,piccione_conj}.  As we'll see in Proposition \ref{prop:conj} and Theorem \ref{thm:conj} below, this is indeed the case\,---\,and the virtue of the plane wave limit $\gL$ is that we can bypass the need to re-analyze the variational calculus involved in the Morse Index Theorem and instead pass over directly to its (already established) Lorentzian version (see \eqref{eqn:MIT} below). Before doing so, let us provide an example of a causally independent geodesic. Consider the following ``pp-wave-type" metric on $\RR^4 \defeq \{(v,t,x,y)\}$ with signature $(-\!-\!++)$ and $H(t,x,y)$ a smooth function:
$$
h \defeq 2dvdt + H(t,x,y)(dt)^2-(dx)^2+(dy)^2.
$$
The only difference between the Christoffel symbols of $h$ and those of the Lorentzian pp-waves \eqref{eqn:Ch} is the red plus sign appearing here:
$$
\cdh{\partial_x}{\partial_t} = \frac{H_x}{2}\partial_v\commas\cdh{\partial_y}{\partial_t} = \frac{H_y}{2}\partial_v\commas \cdh{\partial_t}{\partial_t} = \frac{H_t}{2}\partial_v\, \textcolor{red}{+}\, \frac{1}{2}H_x\partial_x - \frac{1}{2}H_y\partial_y.
$$
As a consequence, it is straightforward to verify that, while the only nonvanishing components of the Riemann curvature tensor of $h$ are, like \eqref{eqn:psec}, still
\beqa
\label{eqn:ssh}
\text{Rm}_{\scalebox{0.5}{\emph{h}}}(\partial_x,\partial_t,\partial_t,\partial_x) = -\frac{H_{xx}}{2} &\commas& \text{Rm}_{\scalebox{0.5}{\emph{h}}}(\partial_y,\partial_t,\partial_t,\partial_y) = -\frac{H_{yy}}{2},\nonumber\\
&&\hspace{-1in}\text{Rm}_{\scalebox{0.5}{\emph{h}}}(\partial_x,\partial_t,\partial_t,\partial_y) = -\frac{H_{xy}}{2},
\eeqa
the Ricci tensor now takes the form
\beqa
\label{eqn:newRicci}
\text{Ric}_{\scalebox{0.5}{\emph{h}}}(\partial_t,\partial_t) = -\frac{1}{2}(\textcolor{red}{-}H_{xx}+H_{yy}),
\eeqa
with all other components still vanishing.  Now set $H(t,x,y) = x^2-y^2$ and consider the curve $\gamma(s) =(0,s,0,0)$. Since $\gamma'(s) = \partial_t\big|_{\gamma(s)}$,
$$
h(\gamma',\gamma')\Big|_{\gamma(s)} = H\Big|_{\gamma(s)} = 0 \comma \cdh{\gamma'}{\gamma'} = \frac{1}{2}H_x\partial_x - \frac{1}{2}H_y\partial_y\Big|_{\gamma(s)} = 0,
$$
so that $\gamma(s)$ is a (complete) lightlike geodesic. Furthermore, $\partial_x$ and $\partial_y$ are, respectively, unit timelike and spacelike vector fields which, when restricted to $\gamma(s)$, are parallel along it:
$$
\cdh{\gamma'}{\partial_x} = \frac{H_x}{2}\partial_v\Big|_{\gamma(s)} = 0 = \frac{H_y}{2}\partial_v\Big|_{\gamma(s)} =\cdh{\gamma'}{\partial_y}.
$$
We may therefore take $\{E_1,E_2\} = \{\partial_x\big|_{\gamma(s)},\partial_y\big|_{\gamma(s)}\}$ in Lemma \ref{lemma:r}. An application of \eqref{eqn:ssh} thus shows that $\gamma(s)$ is causally independent:
$$
\text{Rm}_{\scalebox{0.5}{\emph{h}}}(\partial_x,\gamma',\gamma',\partial_x) = -1\commass \text{Rm}_{\scalebox{0.5}{\emph{h}}}(\partial_y,\gamma',\gamma',\partial_y) = +1 \commass \text{Rm}_{\scalebox{0.5}{\emph{h}}}(\partial_x,\gamma',\gamma',\partial_y) = 0.
$$
(This example generalizes easily to arbitrary dimension and signature.) Now notice one further property of $\gamma(s)$, namely, that it has conjugate points. Indeed, the vector field
$$
J(s) \defeq (\sin s)\partial_x\big|_{\gamma(s)}
$$
is a Jacobi field along $\gamma(s)$. This follows because $J''(s) = -(\sin s)\partial_x\big|_{\gamma(s)}$ and $R_{\scalebox{0.5}{\emph{h}}}(J,\gamma')\gamma'\big|_{\gamma(s)} = (\sin s)R_{\scalebox{0.5}{\emph{h}}}(\partial_x,\gamma')\gamma'\big|_{\gamma(s)}$, so that
$$
h(J''+R_{\scalebox{0.5}{\emph{h}}}(J,\gamma')\gamma',\partial_x) = -(\sin s)h_{xx} + (\sin s)\text{Rm}_{\scalebox{0.5}{\emph{h}}}(\partial_x,\gamma',\gamma',\partial_x) = 0,
$$ 
with all other components vanishing also. As we will show in Theorem \ref{thm:conj} below, this is no coincidence: The completeness of $\gamma(s)$, together with the fact that $\text{Ric}_{\scalebox{0.5}{\emph{h}}}(\gamma',\gamma') \geq 0$ (recall \eqref{eqn:newRicci}) while $\text{Rm}_{\scalebox{0.5}{\emph{h}}}(\cdot,\gamma',\gamma',\cdot) \neq 0$, \emph{guarantees} it.  This is, in fact, the semi-Riemannian generalization of the classical focusing result \cite{HP1970}. As we now show, it is also a natural application of our semi-Riemannian plane wave limit.

\subsection{Geodesic deviation of causally independent geodesics.}\label{subsec:proof}To do so, we should first address the reason for the inclusion of the factor ``$\vep_i|\vep_j|$" in \eqref{eqn:geod2}. As we now show, its inclusion is required so that $\gL$ has the same property as was shown to hold for Penrose's original limit in \cite{blau04}\,---\,namely, so that $\gL$ encodes $\gamma(t)$'s \emph{geodesic deviation} onto the geodesics of the plane wave $(\RR^{r+2},\gL)$. When $g$ is semi-Riemannian, the only caveat is that $\gamma$ must be causally independent:
\begin{prop}
\label{prop:conj}
Let $\gamma(t)$ be a causally independent geodesic of a semi-Riemannian $n$-manifold $(M,g)$ \emph{(}$n\geq3$\emph{)} and let \emph{$\gL$} be the plane wave limit of $(M,g)$ along $\gamma(t)$ as in Definition \ref{def:PWL2}. Then any geodesic $\tilde{\gamma}(s)$ of \emph{$\gL$} satisfying \emph{$\gL(\tilde{\gamma}',\partial_v) \neq 0$} has conjugate points if and only if $\gamma(t)$ does.
\end{prop}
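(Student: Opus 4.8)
The plan is to reduce the existence of a conjugate pair, on both sides, to the existence of a nontrivial solution of one and the same linear ``transverse Jacobi'' system
$$
\ddot{J}^i \;=\; \sum_{j=1}^r \Aij(t)\,J^j \comma i=1,\dots,r, \qquad (\star)
$$
with the $\Aij(t)$ as in \eqref{eqn:geod2}, and then to match parameters by an affine change of variable. First, on $(M,g)$: I would fix the parallel frame $\{E_1,\dots,E_r\}$ of Lemma~\ref{lemma:r} witnessing causal independence (Definition~\ref{def:cc}), completing it in the lightlike case by a parallel null field $N$ with $g(N,N)=0$, $g(N,\gamma')=1$, $g(N,E_i)=0$, so that $\{\gamma',N,E_1,\dots,E_r\}$ is a parallel frame. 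Expanding a Jacobi field $J$ along $\gamma$ in this frame, the key elementary fact is that $g(J,\gamma')$ is affine in $t$ (differentiate twice, using $\text{Rm}_{\scalebox{0.6}{$g$}}(J,\gamma',\gamma',\gamma')=0$): if $J$ vanishes at $t_1\neq t_2$, this forces the coefficient of $J$ along $\gamma'$ to vanish identically in the non-null case, and the coefficient along $N$ to vanish in the lightlike case, where it leaves a single free $\gamma'$-coefficient whose second-order equation has right-hand side depending only on the $E_i$-coefficients $J^i$ (hence it can be uniquely arranged to vanish at $t_1$ and $t_2$ once the $J^i$ are fixed --- a linear function vanishing at $t_1$ with prescribed value at $t_2$ has a unique slope). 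In either case all the content is carried by the closed subsystem for $J^1,\dots,J^r$, which, expanding $\text{Rm}_{\scalebox{0.6}{$g$}}(J,\gamma',\gamma',\cdot)$ in the frame and using \eqref{eqn:geod2} together with the curvature symmetry $\text{Rm}_{\scalebox{0.6}{$g$}}(E_i,\gamma',\gamma',E_j)=\text{Rm}_{\scalebox{0.6}{$g$}}(E_j,\gamma',\gamma',E_i)$, is precisely $(\star)$. Thus $\gamma$ has a conjugate pair iff $(\star)$ has a nontrivial solution vanishing at two points.

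On the plane wave limit, the geodesic equations of $\gL$ are \eqref{eqn:t} in Proposition~\ref{prop:1}: $\tilde\gamma^t$ is affine, $\ddot{\tilde\gamma}^i=\tfrac12(\dot{\tilde\gamma}^t)^2H_i$, and $\ddot{\tilde\gamma}^v$ does not involve $\tilde\gamma^v$ or $\dot{\tilde\gamma}^v$. The hypothesis $\gL(\tilde\gamma',\partial_v)\neq 0$ says exactly $\dot{\tilde\gamma}^t=a\neq 0$ (a constant), so $\tilde\gamma^t(s)=as+b$; as the $x^i$- and $v$-equations are then non-autonomous linear, $\tilde\gamma$ is defined for precisely those $s$ with $as+b\in I$, and $s\mapsto t=as+b$ is an affine bijection of the domain of $\tilde\gamma$ onto $I$. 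Here causal independence enters on this side: by \eqref{eqn:some0} it is equivalent to the symmetry $\Aij(t)=A^{\scalebox{0.6}{\emph{$\gamma$}}}_{ji}(t)$, whence $H_i=2\sum_j\Aij(t)\,x^j$ and $\ddot{\tilde\gamma}^i=a^2\sum_j\Aij(as+b)\,\tilde\gamma^j$. Linearizing \eqref{eqn:t} about $\tilde\gamma$, a Jacobi field $\tilde J=(\tilde J^v,\tilde J^t,\tilde J^i)$ vanishing at $s_1\neq s_2$ forces $\tilde J^t\equiv 0$ (since $\ddot{\tilde J}^t=0$); the component $\tilde J^v$ is a free rider, its linearized equation involving only $\tilde J^i,\dot{\tilde J}^i$, so it can be arranged to vanish at $s_1,s_2$; and the surviving subsystem $\ddot{\tilde J}^i=a^2\sum_j\Aij(as+b)\tilde J^j$ is, under $t=as+b$, exactly $(\star)$. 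Therefore $\tilde\gamma(s_1),\tilde\gamma(s_2)$ are conjugate iff $(\star)$ has a nontrivial solution vanishing at $as_1+b$ and $as_2+b$, i.e.\ iff $\gamma(as_1+b),\gamma(as_2+b)$ are conjugate; since $s\mapsto as+b$ bijects the two domains, $\tilde\gamma$ has a conjugate pair iff $\gamma$ does. (Changing the parallel frame, or the auxiliary $N$, alters $\gL$, resp.\ $(\star)$, only by an isometry, resp.\ a constant linear change, so conjugacy is unaffected; cf.\ Lemmas~\ref{lemma:same}--\ref{lemma:iso}.)

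The step I expect to be the main obstacle is the bookkeeping in these two linearizations: showing rigorously that conjugate pairs are detected by the transverse subsystem $(\star)$ \emph{alone}, so that neither the affine component ($\tilde J^t$, resp.\ $g(J,\gamma')$) nor the free-rider component ($\tilde J^v$, resp.\ the $\gamma'$-coefficient) can create or destroy conjugacy. This rests on the ``triangular'' structure of the pp-wave geodesic equations \eqref{eqn:t}, mirrored on the $(M,g)$ side by the affineness of $g(J,\gamma')$; arranging the two reductions in manifestly parallel form is the real work. Two minor points: in the lightlike case one may prefer the quotient-bundle ($\gamma'^{\perp}/\gamma'$, resp.\ $\tilde{\gamma}'^{\perp}/\tilde{\gamma}'$) notion of conjugacy along null geodesics (cf.\ \cite{beem}), which one checks coincides with the ``Jacobi field vanishing at both endpoints'' criterion used above (tangential Jacobi fields, being $\propto(as+b)\gamma'$, never vanish twice); and causal independence is genuinely used, for without $\Aij=A^{\scalebox{0.6}{\emph{$\gamma$}}}_{ji}$ the transverse geodesic equation of $\gL$ would carry the symmetrized $\tfrac12(\Aij+A^{\scalebox{0.6}{\emph{$\gamma$}}}_{ji})$ and would no longer match the Jacobi equation of $\gamma$.
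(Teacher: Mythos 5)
Your proof is correct, and it reaches the same destination as the paper's by a noticeably cleaner route. Both arguments reduce conjugacy on each side to a second-order linear ``transverse'' system along the geodesic in the adapted parallel frame (resp.\ Brinkmann coordinates), and both dispose of the remaining components the same way (the affine component must vanish identically if the Jacobi field vanishes twice; the $v$-, resp.\ $\gamma'$-, component is a free rider whose two affine constants can always be fixed). Where you diverge is in how causal independence is exploited. The paper keeps the two transverse systems distinct, splits a Jacobi field $\tilde J$ of $\gL$ into its timelike and spacelike parts $\tilde J_{\scalebox{0.6}{$t$}},\tilde J_{\scalebox{0.6}{$s$}}$, and uses the block structure of \eqref{eqn:some0} to show each part is separately a Jacobi field of $g$; this yields an ``at most two-to-one'' correspondence. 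You instead observe that causal independence of $\gamma$ (with respect to the chosen frame) is \emph{equivalent} to the symmetry $\Aij=A^{\scalebox{0.6}{\emph{$\gamma$}}}_{ji}$ of the wave profile, so that the symmetrized matrix $\tfrac12(\Aij+A^{\scalebox{0.6}{\emph{$\gamma$}}}_{ji})$ appearing in the curvature of $\gL$ coincides with the matrix $\Aij$ governing the Jacobi equation of $\gamma$, and the two transverse systems are literally identical. This buys you a direct bijection between transverse solutions (and hence, after the affine reparametrization $t=as+b$, between conjugate pairs with their parameter values), which would in fact sharpen the inequality $\dim(J_t)\leq 2\dim(\tilde J_t)$ used in \eqref{eqn:MIT} to an equality; it also removes the need to verify separately that $\tilde J_{\scalebox{0.6}{$t$}}$ and $\tilde J_{\scalebox{0.6}{$s$}}$ each satisfy the Jacobi equation. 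Your treatment of the lightlike case (introducing the parallel null field $N$ and checking that the $N$-coefficient is affine while the $\gamma'$-coefficient is a sourced free rider) is more explicit than the paper's, which tacitly writes normal Jacobi fields as $\sum J^iE_i$; your version is the more careful one. The only caution is one you already flag: for lightlike $\gamma$ one should fix the convention for conjugacy (quotient by $\gamma'$ or not) and note the two notions agree here because tangential Jacobi fields are affine multiples of $\gamma'$ and never vanish twice.
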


\begin{proof}
Let $\{E_1,\dots,E_k,E_{k+1},\dots,E_{r}\}$ be a parallel frame along $\gamma(t)$ as in Definition \ref{def:cc}, with $\{E_1,\dots,E_k\}$ the timelike vectors (by Lemma \ref{lemma:r}, $k=\nu$ if $\gamma(t)$ is spacelike and $k=\nu-1$ if $\gamma(t)$ is timelike or lightlike, $\nu$ being the index of $g$).  We may express any normal Jacobi field $J(t)$ along $\gamma(t)$ as $J(t) = \sum_{i=1}^{r}J^i(t)E_i$ for some smooth functions $J^1(t),\dots,J^{r}(t)$. Now, let $\tilde{\gamma}(s)$ be a geodesic of $\gL$ satisfying $\gL(\tilde{\gamma}',\partial_v) \neq 0$.  As $\tilde{\gamma}^t(s)$ must be linear in $s$ (recall \eqref{eqn:t}), and as we are assuming that $\dot{\tilde{\gamma}}^t = \gL(\tilde{\gamma}',\partial_v) \neq 0$, we may rescale the affine parameter $s$ if necessary so that $\tilde{\gamma}^t(s) = s$; i.e., so that $\tilde{\gamma}(s)$'s domain coincides with the maximal domain $I \subseteq \RR$ of $\gamma(t)$.  Suppose that this $\tilde{\gamma}(s)$ has a pair of conjugate points; then there exists a nontrivial Jacobi field $\tilde{J}(s)$ along $\tilde{\gamma}(s)$ vanishing at two distinct points (and hence orthogonal to $\tilde{\gamma}'(s)$).  Expressing $\tilde{J}(s)$ as $\tilde{J}(s) = (\tilde{J}^v(s),\tilde{J}^t(s),\tilde{J}^1(s),\dots,\tilde{J}^{r}(s))$ in the coordinates \eqref{eqn:PWL2}, observe that $\tilde{J}^t(s) = \gL(\tilde{J},\partial_v)\big|_{\tilde{\gamma}(s)}$, too, must be linear in $s$, because $\partial_v$ is parallel:
$$
\ddot{\tilde{J}}^t(s) =\gL(\tilde{J}'',\partial_v)\Big|_{\tilde{\gamma}(s)} = -\text{Rm}_{\scalebox{0.6}{$\gL$}}(\tilde{J},\tilde{\gamma}',\tilde{\gamma}',\partial_v)\Big|_{\tilde{\gamma}(s)} = 0.
$$
Thus if $\tilde{J}(s)$ is to be zero at two distinct points, then $\tilde{J}^t(s) = 0$. Furthermore, if we use the fact that $\gL(\tilde{J},\tilde{\gamma}')\big|_{\tilde{\gamma}(s)} = 0$ to solve for $\tilde{J}^v$ in terms of $\tilde{J}^i,\dot{\tilde{\gamma}}^i$, then we arrive at the following expression for $\tilde{J}(s)$:
\beqa
\label{eqn:J0}
\tilde{J}(s) = -\Big(\sum_{i=1}^{r}\tilde{J}^i(s)\dot{\tilde{\gamma}}^i(s)\Big)\partial_v + \sum_{i=1}^{r}\tilde{J}^i(s) \partial_i\,\Big|_{\tilde{\gamma}(s)}.
\eeqa
In particular, this shows that the functions $\tilde{J}^1(s),\dots,\tilde{J}^{r}(s)$ cannot all vanish, as $\tilde{J}(s)$ is nontrivial.  With this in mind, and recalling that $\{E_1,\dots,E_k\}$ are the timelike vectors and $\{E_{k+1},\dots,E_r\}$ the spacelike ones, we now claim that the following two vector fields along $\gamma(t)$ in $(M,g)$,
\beqa
\label{eqn:newJg}
\tilde{J}_{\scalebox{0.6}{$t$}}(t) \defeq \sum_{j=1}^{k}\tilde{J}^j(t)E_j \comma \tilde{J}_{\scalebox{0.6}{$s$}}(t) \defeq \!\!\sum_{j=k+1}^{r}\!\!\tilde{J}^j(t)E_j,
\eeqa
are both Jacobi fields (by our remark above, note that they cannot both be trivial). Indeed, observe that each $\tilde{J}^j(t) = \tilde{J}^j(s)$ in \eqref{eqn:newJg} can be expressed as $\tilde{J}^j(s) = \gL(\tilde{J},\partial_j)\big|_{\tilde{\gamma}(s)}$; differentiating the latter along $\tilde{\gamma}(s)$, and noting via \eqref{eqn:Ch} that
$$
\nabla^{\scalebox{0.6}{$\gL$}}_{\!\tilde{\gamma}'(s)}\partial_j = \nabla^{\scalebox{0.6}{$\gL$}}_{\!\partial_t}\partial_j\Big|_{\tilde{\gamma}(s)} = \frac{1}{2}H_j(\tilde{\gamma}(s))\partial_v\Big|_{\tilde{\gamma}(s)},
$$
we have
$$
\dot{\tilde{J}}^j(s) = \gL(\tilde{J}',\partial_j) + \frac{1}{2}H_j(\tilde{\gamma}(s))\cancelto{0}{\gL(\tilde{J},\partial_v)}\Big|_{\tilde{\gamma}(s)} \comma j=1,\dots,r,
$$
where the second term on the right-hand sides vanishes because $\gL(\tilde{J},\partial_v) = \tilde{J}^t(s) = 0$.  A second derivative likewise yields $\ddot{\tilde{J}}^j(s) = \gL(\tilde{J}'',\partial_j)\big|_{\tilde{\gamma}(s)}$, using the fact that $\gL(\tilde{J}',\partial_v)\big|_{\tilde{\gamma}(s)} = \gL(\tilde{J},\partial_v)'\big|_{\tilde{\gamma}(s)} = 0$. The $\gL$-Jacobi equation for $\tilde{J}(s)$ now yields, for each $j=1,\dots,r$,
\beqa
\ddot{\tilde{J}}^j(s) \!\!&=&\!\! \gL(\tilde{J}'',\partial_j)\Big|_{\tilde{\gamma}(s)}\nonumber\\
&=&\!\! -\text{Rm}_{\scalebox{0.6}{$\gL$}}(\tilde{J},\tilde{\gamma}',\tilde{\gamma}',\partial_j)\Big|_{\tilde{\gamma}(s)}\nonumber\\
&=&\!\! -\sum_{i=1}^{r}\tilde{J}^i(s)\underbrace{\,\text{Rm}_{\scalebox{0.6}{$\gL$}}(\partial_i,\tilde{\gamma}',\tilde{\gamma}',\partial_j)\,}_{\text{$\text{Rm}_{\scalebox{0.6}{$\gL$}}(\partial_i,\partial_t,\partial_t,\partial_j)$}}\Big|_{\tilde{\gamma}(s)}\nonumber\\
&\overset{\eqref{eqn:some0}}{=}&\!\! -\sum_{i=1}^{r}\tilde{J}^i(s)\text{$\left\{\begin{array}{ll}
\vep_i|\vep_j|\,\text{Rm}_{\scalebox{0.6}{\emph{g}}}(E_i,\gamma',\gamma',E_j) &\text{if $\vep_i=\vep_j$},\\
0 &\text{if $\vep_i\neq \vep_j$}.
\end{array}\right.$}\label{eqn:vep_sum}
\eeqa
Consider now $\tilde{J}_{\scalebox{0.6}{$t$}}$ in \eqref{eqn:newJg}, for which $j \in \{1,\dots,k\}$, hence $\vep_j = -1$. For these $j$, the nonzero terms in \eqref{eqn:vep_sum} will arise when $\vep_i=-1$; i.e., when $i \in \{1,\dots,k\}$:
\beqa
\ddot{\tilde{J}}^j(s) \!\!&=&\!\! \sum_{i=1}^{k}\tilde{J}^i(s)\text{Rm}_{\scalebox{0.6}{$g$}}(E_i,\gamma',\gamma',E_j)\Big|_{\gamma(s)} \comma j = 1,\dots,k,\nonumber\\
&\overset{\eqref{eqn:newJg}}{=}&\!\! \text{Rm}_{\scalebox{0.6}{$g$}}(\tilde{J}_{\scalebox{0.6}{$t$}},\gamma',\gamma',E_j)\Big|_{\gamma(s)}.\label{eqn:last_t}
\eeqa
Likewise for $\tilde{J}_{\scalebox{0.6}{$s$}}$ in \eqref{eqn:newJg}, for which $j \in \{k+1,\dots,r\}$, so that $\vep_j = +1$. Now the nonzero terms in \eqref{eqn:vep_sum} will arise when $\vep_i=+1$, when $i \in \{k+1,\dots,r\}$:
\beqa
\ddot{\tilde{J}}^j(s) \!\!&=&\!\! -\!\!\sum_{i=k+1}^{r}\!\!\tilde{J}^i(s)\text{Rm}_{\scalebox{0.6}{$g$}}(E_i,\gamma',\gamma',E_j)\Big|_{\gamma(s)} \comma j = k+1,\dots,r,\nonumber\\
&\overset{\eqref{eqn:newJg}}{=}&\!\! -\text{Rm}_{\scalebox{0.6}{$g$}}(\tilde{J}_{\scalebox{0.6}{$s$}},\gamma',\gamma',E_j)\Big|_{\gamma(s)}.\label{eqn:last_s}
\eeqa
We can now verify that $\tilde{J}_{\scalebox{0.6}{$t$}}(t)$ and $\tilde{J}_{\scalebox{0.6}{$s$}}(t)$ are indeed Jacobi fields along $\gamma(t)$ in $(M,g)$. To begin with, the vector field $\tilde{J}_{\scalebox{0.6}{$t$}}''+R_{\scalebox{0.6}{$g$}}(\tilde{J}_{\scalebox{0.6}{$t$}},\gamma')\gamma'$ is $g$-orthogonal to $\gamma'$ because $\tilde{J}_{\scalebox{0.6}{$t$}}''$ and $R_{\scalebox{0.6}{$g$}}(\tilde{J}_{\scalebox{0.6}{$t$}},\gamma')\gamma'$ are, hence it lies in $\text{span}\{E_1,\dots,E_r\}$ (or $\text{span}\{E_1,\dots,E_r,\gamma'\}$ if $\gamma(t)$ is lightlike).  Verifying that $\tilde{J}_{\scalebox{0.6}{$t$}}''+R_{\scalebox{0.6}{$g$}}(\tilde{J}_{\scalebox{0.6}{$t$}},\gamma')\gamma' = 0$ therefore means verifying that $g(\tilde{J}_{\scalebox{0.6}{$t$}}''+R_{\scalebox{0.6}{$g$}}(\tilde{J}_{\scalebox{0.6}{$t$}},\gamma')\gamma' ,E_i) = 0$ for each $i=1,\dots,r$ (as well as with $\gamma'$ if $\gamma(t)$ is lightlike, though this latter verification is trivial, since $g(\tilde{J}_{\scalebox{0.6}{$t$}}'',\gamma') = g(\tilde{J}_{\scalebox{0.6}{$t$}},\gamma')'' = 0 =-\text{Rm}_{\scalebox{0.6}{$g$}}(\tilde{J}_{\scalebox{0.6}{$t$}},\gamma',\gamma',\gamma')$). To do so, we first call upon \eqref{eqn:last_t}, which tells us that for (timelike) $E_{1},\dots,E_k$,
$$
g(\tilde{J}_{\scalebox{0.6}{$t$}}'',E_j) \overset{\eqref{eqn:newJg}}{=} -\ddot{\tilde{J}}^j \overset{\eqref{eqn:last_t}}{=} -\text{Rm}_{\scalebox{0.6}{$g$}}(\tilde{J}_{\scalebox{0.6}{$t$}},\gamma',\gamma',E_j) \comma j = 1,\dots,k.
$$
For $E_{k+1},\dots,E_r$, we use the fact that $\gamma(t)$ is causally independent:
$$
g(\tilde{J}_{\scalebox{0.6}{$t$}}'',E_j) \overset{\eqref{eqn:newJg}}{=} 0  = -\text{Rm}_{\scalebox{0.6}{$g$}}(\tilde{J}_{\scalebox{0.6}{$t$}},\gamma',\gamma',E_j)\comma j = k+1,\dots,r.
$$
Thus $\tilde{J}_{\scalebox{0.6}{$t$}}(t)$ is indeed a Jacobi field along $\gamma(t)$; if it is nontrivial, then it vanishes at two distinct points\,---\,in fact at exactly the same affine parameter values as $\tilde{\gamma}(s)$. A similar analysis and conclusion follows for $\tilde{J}_{\scalebox{0.6}{$s$}}(t)$, via \eqref{eqn:last_s}. Conversely, if $J(t) = \sum_{i=1}^{r}J^i(t)E_i$ is a Jacobi field along $\gamma(t)$, then $J(t) = J_{\scalebox{0.6}{$t$}}(t)+J_{\scalebox{0.6}{$s$}}(t)$ as in \eqref{eqn:newJg}, and the reverse of the computation that led to \eqref{eqn:last_t}-\eqref{eqn:last_s} shows that \eqref{eqn:J0}, with $J^i(s)$ in place of $\tilde{J}^i(s)$,  will be a normal Jacobi field along any geodesic $\tilde{\gamma}(s)$ of $\gL$ satisfying $\tilde{\gamma}^t(s)=s$. In particular, $\tilde{\gamma}(s)$ will have conjugate points if $\gamma(t)$ does\,---\,at the same affine parameter values once again. Note that if $J(t) = \sum_{i=1}^{r}J^i(t)E_i$ is a Jacobi field along $\gamma(t)$ and the latter is causally independent, then at least one of $J_{\scalebox{0.6}{$t$}}(t)$ or $J_{\scalebox{0.6}{$s$}}(t)$ must be nontrivial. Thus the correspondence between Jacobi fields vanishing at two points is at most two-to-one: Up to two along $\gamma(t)$ for every one along $\tilde{\gamma}(s)$.
\end{proof}

Before using Proposition \ref{prop:conj} to generalize \cite{HP1970}, let us record the following easy corollary, which is immediate:

\begin{cor}
Let \emph{$\gL$} be the plane wave limit of a causally independent geodesic $\gamma$. If $\gamma$ has a pair of conjugate points, then all geodesics $\tilde{\gamma}$ of \emph{$\gL$} satisfying \emph{$\gL(\tilde{\gamma}',\partial_v) \neq 0$} have conjugate points.
\end{cor}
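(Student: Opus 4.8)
The plan is to read off the corollary as one half of the biconditional already proved in Proposition~\ref{prop:conj}. That proposition fixes a causally independent geodesic $\gamma(t)$ of $(M,g)$, forms its plane wave limit $\gL$, and shows that \emph{any} geodesic $\tilde{\gamma}(s)$ of $\gL$ with $\gL(\tilde{\gamma}',\partial_v)\neq 0$ has conjugate points if and only if $\gamma(t)$ does. The corollary is simply the ``$\gamma$ has conjugate points $\Rightarrow$ $\tilde{\gamma}$ has conjugate points'' direction, asserted uniformly over all admissible $\tilde{\gamma}$ at once; so the work is to observe that this direction holds for each such $\tilde{\gamma}$ separately and then quantify.

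Concretely, I would proceed as follows. Fix a causally independent geodesic $\gamma$ that admits a pair of conjugate points, and let $\gL$ be its plane wave limit as in Definition~\ref{def:PWL2}. Let $\tilde{\gamma}(s)$ be an arbitrary geodesic of $\gL$ with $\gL(\tilde{\gamma}',\partial_v)\neq 0$. By Proposition~\ref{prop:conj}, since $\gamma$ has conjugate points, so does this $\tilde{\gamma}$. Because $\tilde{\gamma}$ was arbitrary subject only to the transversality condition $\gL(\tilde{\gamma}',\partial_v)\neq 0$, we conclude that every geodesic of $\gL$ satisfying that condition has conjugate points, which is the assertion.

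There is no real obstacle here, only a point worth flagging for the reader's benefit: the proof of Proposition~\ref{prop:conj} is constructive in the relevant direction. Starting from a nontrivial Jacobi field $J(t)=\sum_{i=1}^r J^i(t)E_i$ along $\gamma(t)$ vanishing at two points, one first rescales the affine parameter of $\tilde{\gamma}$ so that $\tilde{\gamma}^t(s)=s$ (possible precisely because $\gL(\tilde{\gamma}',\partial_v)\neq 0$ and $\tilde{\gamma}^t$ is linear), and then the vector field \eqref{eqn:J0}, with $J^i(s)$ in place of $\tilde{J}^i(s)$, is a normal Jacobi field along $\tilde{\gamma}(s)$ vanishing at the same parameter values. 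Hence the pair of conjugate points transfers to \emph{every} admissible $\tilde{\gamma}$, with the location of the conjugate pair independent of the choice of $\tilde{\gamma}$. Nothing further is required, so the proof is immediate.
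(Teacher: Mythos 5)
Your proposal is correct and matches the paper exactly: the paper records this corollary as ``immediate'' from Proposition \ref{prop:conj}, which is precisely the one direction of the biconditional, quantified over all geodesics $\tilde{\gamma}$ with $\gL(\tilde{\gamma}',\partial_v)\neq 0$. Your additional remark that the transferred Jacobi field vanishes at the same affine parameter values independently of the choice of $\tilde{\gamma}$ is also consistent with the proof of the proposition.
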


\subsection{Application to conjugate points} We now apply Proposition \ref{prop:conj} towards extending \cite{HP1970} (see also \cite[Prop.~12.10,12.17]{beem}) and \cite[Theorem~2]{Eschenburg} to all causally independent geodesics on semi-Riemannian manifolds:
\begin{thm}
\label{thm:conj}
Let $(M,g)$ be a semi-Riemannian $n$-manifold \emph{(}$n\geq 3$\emph{)} and $\gamma$ a complete causally independent geodesic. If $\gamma$ has no conjugate points and \emph{$\text{Ric}_{\scalebox{0.6}{$g$}}(\gamma',\gamma') \geq 0$}, then \emph{$\text{Rm}_{\scalebox{0.6}{$g$}}(\cdot,\gamma',\gamma',\cdot)\big|_{\gamma'^{\perp}} = 0$}. Furthermore, causally independent geodesic segments always have finitely many conjugate points.
\end{thm}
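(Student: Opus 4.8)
The plan is to prove this by reducing everything to the already-established \emph{Lorentzian} focusing theorem of Hawking \& Penrose \cite{HP1970} (and, for the last sentence, to the Lorentzian Morse Index Theorem, cf.\ \cite{beem}) by passing to the plane wave limit $\gL$ and invoking Proposition \ref{prop:conj}. First I would fix a parallel frame $\{E_1,\dots,E_r\}$ along $\gamma$ as in Definition \ref{def:cc}, form the Lorentzian plane wave limit $\gL$ on $\RR^{r+2}$ as in Definition \ref{def:PWL2}, and single out the curve $\tilde\gamma_0(s)\defeq(0,s,0,\dots,0)$. Since $H=\sum_{i,j}\Aij(t)x^ix^j$ has $H_i\big|_{x=0}=H_t\big|_{x=0}=0$, the geodesic equations \eqref{eqn:t} show $\tilde\gamma_0$ is a geodesic of $\gL$ with $\tilde\gamma_0'=\partial_t$; as $\gL(\partial_t,\partial_t)\big|_{x=0}=0$ it is \emph{lightlike}, and since $\gamma$ is complete its domain is all of $\RR$, so $\tilde\gamma_0$ is a complete lightlike geodesic. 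Because $\gL(\tilde\gamma_0',\partial_v)=1\neq0$, Proposition \ref{prop:conj} applies: from the absence of conjugate points along $\gamma$ I get their absence along $\tilde\gamma_0$.

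Next I would transfer the curvature hypothesis. Using \eqref{eqn:pRic}, \eqref{eqn:geod2} (recall $|\vep_i|=1$), and the standard trace formula for the Ricci tensor exactly as in \eqref{eqn:Harris} (in the lightlike case one traces over a quasi-orthonormal basis, the $\gamma'$-terms dropping by antisymmetry),
\[
\text{Ric}_{\scalebox{0.6}{$\gL$}}(\partial_t,\partial_t)=-\sum_{i=1}^{r}\Aii(t)=\sum_{i=1}^{r}\vep_i\,\text{Rm}_{\scalebox{0.6}{$g$}}(E_i,\gamma',\gamma',E_i)\Big|_{\gamma(t)}=\text{Ric}_{\scalebox{0.6}{$g$}}(\gamma',\gamma')\Big|_{\gamma(t)}\ \ge\ 0.
\]
Thus $\tilde\gamma_0$ is a complete lightlike geodesic of $(\RR^{r+2},\gL)$, with no conjugate points and nonnegative Ricci curvature along it, so the Hawking--Penrose focusing theorem forces $\text{Rm}_{\scalebox{0.6}{$\gL$}}(\cdot,\tilde\gamma_0',\tilde\gamma_0',\cdot)$ to vanish on the (positive-definite) screen bundle $\tilde\gamma_0'^{\perp}/\tilde\gamma_0'\cong\text{span}\{\partial_1,\dots,\partial_r\}$, i.e.\ $\text{Rm}_{\scalebox{0.6}{$\gL$}}(\partial_i,\partial_t,\partial_t,\partial_j)=0$ for all $i,j$. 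By \eqref{eqn:some0} this yields $\text{Rm}_{\scalebox{0.6}{$g$}}(E_i,\gamma',\gamma',E_j)=0$ whenever $\vep_i=\vep_j$, while causal independence (Definition \ref{def:cc}) gives it whenever $\vep_i\neq\vep_j$; hence $\text{Rm}_{\scalebox{0.6}{$g$}}(E_i,\gamma',\gamma',E_j)=0$ for all $i,j=1,\dots,r$. Since $\gamma'^{\perp}$ is spanned by $\{E_1,\dots,E_r\}$ (together with $\gamma'$ itself when $\gamma$ is lightlike, on which $\text{Rm}_{\scalebox{0.6}{$g$}}(\cdot,\gamma',\gamma',\cdot)$ vanishes by antisymmetry), and since this $2$-tensor is symmetric, I conclude $\text{Rm}_{\scalebox{0.6}{$g$}}(\cdot,\gamma',\gamma',\cdot)\big|_{\gamma'^{\perp}}=0$. (For Riemannian or Lorentzian $\gamma$ this recovers \cite{Eschenburg} and \cite{HP1970}.)

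For the last statement I would take a causally independent segment $\gamma\colon[a,b]\to M$ and, as above, the null geodesic segment $\tilde\gamma_0\colon[a,b]\to(\RR^{r+2},\gL)$. The correspondence constructed in the proof of Proposition \ref{prop:conj} is at most two-to-one and respects affine parameters, so every conjugate pair of $\gamma$ in $[a,b]$ produces a conjugate pair of $\tilde\gamma_0$ at the same parameter values, with multiplicity no larger. But $\tilde\gamma_0$ is a \emph{null} geodesic of a \emph{Lorentzian} manifold, so its Jacobi equation in the screen bundle is the Morse--Sturm system $\ddot J^{i}=-\tfrac12\bigl(\Aij(t)+A^{\scalebox{0.6}{\emph{$\gamma$}}}_{ji}(t)\bigr)J^{j}$ with respect to the \emph{positive-definite} Euclidean metric $\sum(dx^i)^2$; by classical Morse--Sturm theory (equivalently the Lorentzian Morse Index Theorem for null geodesics, \cite{beem}) such a system has only finitely many conjugate points in $[a,b]$, each of finite multiplicity. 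Hence the same holds for $\gamma$. Note that causal independence is exactly what makes this work: it is what lets $\gL$ encode $\gamma$'s geodesic deviation (the block structure of the Jacobi equation following Definition \ref{def:cc}) and renders the relevant screen system definite; without it conjugate points may accumulate, as in \cite{helfer,piccione_conj}.

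The main obstacle I expect is not any single computation but making the reduction airtight at two junctures. In the first part, one must check that \emph{all three} hypotheses of the Lorentzian focusing theorem genuinely transfer to $\tilde\gamma_0$: completeness and the identity $\text{Ric}_{\scalebox{0.6}{$\gL$}}(\partial_t,\partial_t)=\text{Ric}_{\scalebox{0.6}{$g$}}(\gamma',\gamma')$ are routine, but the ``no conjugate points'' hypothesis rests entirely on Proposition \ref{prop:conj}, so that proposition's proof must be arranged to apply to the specific lightlike geodesic $\tilde\gamma_0$. In the ``furthermore'' part, the delicate point is that the two-to-one correspondence of Proposition \ref{prop:conj} be stated with enough care on \emph{segments}---not merely for the existence of one conjugate pair---to carry finiteness \emph{and} multiplicities back from $\tilde\gamma_0$ to $\gamma$; this is where the block-separability of the index form in \eqref{eqn:2-to-1} does the real work.
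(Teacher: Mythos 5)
Your proposal is correct and follows the same overall strategy as the paper---reduce to classical Lorentzian results by passing to the plane wave limit $\gL$ and invoking Proposition \ref{prop:conj}---but the execution at the key step is genuinely different. The paper argues by contraposition: assuming $\text{Rm}_{\scalebox{0.6}{$g$}}(\cdot,\gamma',\gamma',\cdot)\big|_{\gamma'^\perp}\neq 0$, it lifts a witness $X=\sum a^iE_i$ to $\widetilde{X}=\sum a^i\partial_i$, chooses a \emph{timelike} geodesic of $\gL$ with $\tilde{\gamma}^t(s)=s$ and $\tilde{\gamma}'(0)\perp\widetilde{X}$, and applies the timelike focusing result \cite[Prop.~12.10]{beem} to produce conjugate points, which Proposition \ref{prop:conj} then transfers back to $\gamma$; the finiteness claim is likewise run through the \emph{timelike} Morse Index Theorem. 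You instead work directly with the distinguished complete \emph{lightlike} geodesic $\tilde{\gamma}_0(s)=(0,s,0,\dots,0)$ (correctly verified to be a null geodesic with $\gL(\tilde{\gamma}_0',\partial_v)\neq0$, so Proposition \ref{prop:conj} applies to it), transfer ``no conjugate points'' forward, and invoke the null version of the focusing theorem (\cite[Prop.~12.17]{beem}, which the paper cites in its introduction but does not use in the proof) on the positive-definite screen $\text{span}\{\partial_1,\dots,\partial_r\}$; the translation back through \eqref{eqn:some0} plus causal independence is exactly right. Your route buys a slightly cleaner argument---no contrapositive, no choice of $X$ or of an auxiliary timelike geodesic, and only the completeness of the single geodesic $\tilde{\gamma}_0$ is needed rather than Theorem \ref{thm:main}(vii)---at the cost of relying on the null focusing and null Morse Index Theorems rather than their more commonly quoted timelike counterparts. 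One small imprecision in your ``furthermore'' paragraph: the correspondence of Proposition \ref{prop:conj} gives $\dim(J_t)\leq 2\dim(\tilde{J}_t)$ (each Jacobi field along $\gamma$ splits into timelike and spacelike parts, each mapping to a Jacobi field along $\tilde{\gamma}_0$), so the multiplicity along $\gamma$ can be up to \emph{twice} that along $\tilde{\gamma}_0$, not ``no larger''; this does not affect finiteness, and is exactly the factor of two appearing in the paper's \eqref{eqn:MIT}.
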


\begin{proof}
We will prove that if $\text{Rm}_{\scalebox{0.6}{$g$}}(\cdot,\gamma',\gamma',\cdot)\big|_{\gamma(0)^{\perp}} \neq 0$, then $\gamma(t)$ must have a pair of conjugate points. To begin with, given a parallel frame $\{E_1,\dots,E_r\}$ along $\gamma(t) $ such that $\text{Rm}_{\scalebox{0.6}{\emph{g}}}(E_i,\gamma',\gamma',E_j) = 0$ whenever $E_i$ and $E_j$ have different causal characters, the condition $\text{Rm}_{\scalebox{0.6}{$g$}}(\cdot,\gamma',\gamma',\cdot)\big|_{\gamma(0)^{\perp}} \neq 0$ implies the existence of a nonzero vector $X \defeq \sum_{i=1}^r a^iE_i\big|_{\gamma(0)} \in \gamma'(0)^\perp$ such that
\beqa
\label{eqn:prop2}
\text{Rm}_{\scalebox{0.6}{$g$}}(X,\gamma'(0),\gamma'(0),X) \neq 0.
\eeqa
Meanwhile, the Ricci tensor of $\gL$ satisfies
\beqa
\label{eqn:prop1}
\text{Ric}_{\scalebox{0.6}{$\gL$}}(\partial_t,\partial_t) \overset{\eqref{eqn:pRic}}{=} -\!\sum_{i=1}^{r} A_{ii}^{\scalebox{0.6}{$\gamma$}}(t) \overset{\eqref{eqn:geod2}}{=} \text{Ric}_{\scalebox{0.6}{$g$}}(\gamma',\gamma')\Big|_{\gamma(t)} \geq 0.
\eeqa
By Theorem \ref{thm:main}(vii), if $\gamma(t)$ is complete, then the plane wave limit $(\RR^{r+2},\gL)$ will be geodesically complete also. By Proposition \ref{prop:conj}, a geodesic $\tilde{\gamma}(s)$ of $\gL$ satisfying $\tilde{\gamma}^t(s) = s$ has conjugate points if and only if $\gamma(t)$ does. For any such geodesic $\tilde{\gamma}(s)$, if we set $\widetilde{X} \defeq \sum_{i=1}^r a^i\partial_i\big|_{\tilde{\gamma}(0)}$, where $a^1,\dots,a^r$ are the same coefficients as for $X$ above (note that $\widetilde{X}$ is $\gL$-spacelike), then
\beqa
\label{eqn:HP1}
\text{Rm}_{\scalebox{0.6}{$\gL$}}(\widetilde{X},\tilde{\gamma}',\tilde{\gamma}',\widetilde{X})\Big|_{\tilde{\gamma}(0)} \overset{\eqref{eqn:psec}}{=} \text{Rm}_{\scalebox{0.6}{$\gL$}}(\widetilde{X},\partial_t,\partial_t,\widetilde{X})\Big|_{\tilde{\gamma}(0)} \overset{\eqref{eqn:some0},\eqref{eqn:prop2}}{\neq} 0
\eeqa
and
\beqa
\label{eqn:HP2}
\text{Ric}_{\scalebox{0.6}{$\gL$}}(\tilde{\gamma}',\tilde{\gamma}')\Big|_{\tilde{\gamma}(s)} \overset{\eqref{eqn:pRic}}{=} \text{Ric}_{\scalebox{0.6}{$\gL$}}(\partial_t,\partial_t)\Big|_{\tilde{\gamma}(s)} \overset{\eqref{eqn:prop1}}{\geq} 0.
\eeqa
Now pick any timelike such geodesic $\tilde{\gamma}(s)$ with the additional property that $\tilde{\gamma}'(0)$ is $\gL$-orthogonal to $\widetilde{X}$. This fact, together with \eqref{eqn:HP1} and \eqref{eqn:HP2}, are exactly the conditions needed to ensure that $\tilde{\gamma}(s)$ will have a pair of conjugate points; see \cite[Proposition~12.10]{beem}. By Proposition \ref{prop:conj}, so must $\gamma(t)$ in $(M,g)$. Finally, the statement about finitely many conjugate points follows from the Lorentzian Morse Index Theorem.  Indeed, let $\tilde{\gamma}\colon[a,b]\lra \RR^{r+2}$ be any timelike geodesic segment of $\gL$ with $\tilde{\gamma}^t(s) = s$. Then its index form is given by
$$
I(V,W) \defeq -\!\int_a^b \big(\gL(V',W') - \text{Rm}_{\scalebox{0.6}{$\gL$}}(V,\tilde{\gamma}',\tilde{\gamma}',W)\big)ds \comma V,W \in V_0^{\perp}.
$$
Recall that the \emph{index of $\tilde{\gamma}$}, denoted $\text{Ind}(\tilde{\gamma})$, is defined to be the supremum over the dimensions of all subspaces $A \subseteq V_0^{\perp}$ on which the restriction $I\big|_{A\times A}$ is positive-definite. Next, for each $s_* \in (a,b)$, let $\text{dim}(\tilde{J}_{s_*})$ denote the dimension of the subspace spanned by all Jacobi fields along $\tilde{\gamma}(s)$ vanishing at $s=a$ and $s = s_*$; thus if $\text{dim}(\tilde{J}_{s_*}) = m > 0$, then $\tilde{\gamma}(s_*)$ is conjugate to $\tilde{\gamma}(a)$ with multiplicity $m$. The (timelike) Morse Index Theorem (see \cite[Theorem~10.27]{beem}) then says that 
$$
\text{Ind}(\tilde{\gamma}) = \!\!\sum_{s \in (a,b)}\!\! \text{dim}(\tilde{J}_{s}) \hspace{.2in}\text{and the latter sum is finite}.
$$
By Proposition \ref{prop:conj}, we thus have the following ``Morse Index Theorem" for causally independent geodesics $\gamma\colon[a,b]\lra M$ of semi-Riemannian manifolds: If we denote by $\text{dim}(J_{t_*})$ the dimension of the subspace spanned by all Jacobi fields along $\gamma(t)$ vanishing at $t=a$ and $t = t_*$, and if we recall once again that $s=t$, then
\beqa
\label{eqn:MIT}
\underbrace{\,\text{dim}(J_{t}) \leq 2\,\text{dim}(\tilde{J}_{t})\,}_{\text{for each $t \in (a,b)$, by Proposition \ref{prop:conj}}} \imp \sum_{t \in (a,b)} \text{dim}(J_{t}) \leq 2\,\text{Ind}(\tilde{\gamma}).
\eeqa
Thus the number of conjugate points along $\gamma(t)$ in $(M,g)$, counting multiplicity, is not only finite, but at most twice the index of any timelike geodesic segment of the plane wave limit $\gL$ of $\gamma(t)$.
\end{proof}

Note that the factor of two in \eqref{eqn:MIT} is to be expected, given our observation about the splitting of the index form in \eqref{eqn:2-to-1}. Finally, it is not hard to show that if $\gamma$ is spacelike or timelike (but not lightlike), then the conclusion of Theorem \ref{thm:conj} can be strengthened, to $\text{Rm}_{\scalebox{0.6}{$g$}}(\cdot,\gamma',\gamma',\cdot) = 0$.

\section*{Acknowledgments}
ABA thanks Miguel Angel Javaloyes with helpful discussions on Section \ref{subsec:geom}, Paolo Piccione with helpful discussions on Section \ref{subsec:proof}, Miguel S\'anchez for helpful discussions on Section \ref{subsec:sanchez}, and Matthias Blau for very helpful discussions throughout the paper.  He also warmly acknowledges the hospitality of the Albert Einstein Center at Universt\"at Bern.

\section*{References}
\renewcommand*{\bibfont}{\footnotesize}
\printbibliography[heading=none]
\end{document}